\newtheorem{theorem}{Theorem}[section]
\newtheorem{lemma}[theorem]{Lemma}
\newtheorem{proposition}[theorem]{Proposition}
\newtheorem{corollary}[theorem]{Corollary}
\newtheorem{remark}[theorem]{Remark}
\def\p{\partial}
\def\bm{\boldsymbol}
\def\ve{\varepsilon}
\def\q{\quad}
\def\cd{\cdot}
\def\O{\Omega}
\def\na{\nabla}
\numberwithin{equation}{section} 
\providecommand{\keywords}[1]
{
\small
\textbf{Keywords:} #1
}
\begin{document}

\title{\Large\bf Lifespan of the  Non-resistive Hall-MHD System with Small Magnetic Gradient}

\author{\normalsize\sc Linbin Yang and Taoran Zhou}

\date{}

\maketitle

\begin{abstract}
In this paper, we consider the non-resistive axially symmetric Hall-MHD system. We show that the lifespan of their strong solutions can be arbitrarily large if their initial magnetic gradient are small enough. Precise lifespan lower bounds for both viscid and inviscid cases are given. 
\end{abstract}
\keywords{Non-resistive, Hall-MHD equations, Axially symmetric, Small magnetic gradient, Lifespan}\\
\\
\textbf{Mathematics Subject Classification:} 35Q35 $\cdot$ 76B03 $\cdot$ 76D03

\section{Introduction and main results}
The 3D non-resistive Hall-MHD (HMHD) system reads
\begin{equation}\label{eq1.1}\tag{HMHD}
    \left\{\begin{array}{l}
		\partial_t \boldsymbol{u}+\boldsymbol{u} \cdot \nabla \boldsymbol{u}+\nabla p-\mu \Delta \boldsymbol{u}=\frac{1}{\mu_0} \boldsymbol{h} \cdot \nabla \boldsymbol{h}, \\
		\partial_t \boldsymbol{h}+\boldsymbol{u} \cdot \nabla \boldsymbol{h}+\nu_0 \nabla \times[(\nabla \times \boldsymbol{h}) \times \boldsymbol{h}]=\boldsymbol{h} \cdot \nabla \boldsymbol{u},  \\
		\nabla \cdot \boldsymbol{u}=0, \\
		\nabla \cdot \boldsymbol{h}=0 .
		\end{array}\right.
	\end{equation}
	with initial data
\begin{equation}\label{ini-data}
	\boldsymbol{u}(0,x)=\boldsymbol{u}_0(x),\text{ and }\, \boldsymbol{h}(0,x)=\boldsymbol{h}_0(x).
\end{equation}
Here $(\boldsymbol{u}, \boldsymbol{h}): \mathbb{R}^+\times\mathbb{R}^3 \rightarrow \mathbb{R}^3\times\mathbb{R}^3$ is the velocity and the magnetic field, respectively. $p: \mathbb{R}^3 \rightarrow \mathbb{R}$ represents the pressure. $\mu, \mu_0, \nu_0$ stand for the constant viscosity, vacuum permeability and ratio for the Hall effect.
	
The equations of Magnetohydrodynamics (MHD) are widely used in describing many physical phenomena, such as fusion plasmas and star formation(see \cite{forbes1991magnetic,shalybkov1997hall}), as they play a crucial role in describing the large-scale interaction between conducting fluids and magnetic fields. The Hall-MHD differs from the classical incompressible MHD, due to the Hall term $\nabla \times [(\nabla \times \boldsymbol{h})\times \boldsymbol{h}]$ which accounts for the decoupling of electron and ion motions. Global well-posedness for the axisymmetric incompressible viscous and resistive Hall-MHD equations was established by Fan et al. \cite{fan2013well}, and later Li and Liu established a global regularity for the  viscous and resistive Hall-MHD equations with low regularity axisymmetric data \cite{i2023global}. Recently, kinds of ill-posedness results of Hall- and electron-MHD systems were shown in \cite{jeong2022cauchy} and Li provided the local well-posedness of 3D ideal Hall-MHD system with an azimuthal magnetic field \cite{LZ2024B}.

In this paper, the coefficients $\mu_0$, $\nu_0$ do not play an essential role in the proof, so without loss of generality, we set $\mu_0=\nu_0=1$. We consider the Hall-MHD with the following cylindrical coordinates: 
\begin{equation*}
	r=\sqrt{x_1^2+x_2^2},\ \theta=\arctan\frac{x_2}{x_1},\ z=x_3.
\end{equation*}
Assume both $\boldsymbol{u}$ and $\boldsymbol{h}$ are axially symmetric, which is
\begin{equation*}
	\left\{\begin{aligned}
		\boldsymbol{u}&=u_r(t,r,z)\boldsymbol{e_r}+u_z(t,r,z)\boldsymbol{e_z},\\
		\boldsymbol{h}&=h_{\theta}\boldsymbol{e_{\theta}},
	\end{aligned}\right.
\end{equation*}
where the basis vectors $\boldsymbol{e_r},\boldsymbol{e_{\theta}},\boldsymbol{e_z}$ are
	\begin{equation*}
		\boldsymbol{e_r}=\left(\frac{x_1}{r},\frac{x_2}{r},0\right),\ \boldsymbol{e_{\theta}}=\left( -\frac{x_2}{r},\frac{x_1}{r},0 \right), \ \boldsymbol{e_z}=(0,0,1).
	\end{equation*}
	
From the local well-posedness result \cite{LZ2024B}, it is clear that if the initial velocity $\boldsymbol{u_0} \cdot \boldsymbol{e_\theta}$ vanishes, then $u_\theta$ will vanish for all time. That is, by assuming $\boldsymbol{u_0}\cdot \boldsymbol{e_{\theta}}=h_r =h_z\equiv0$, and combining with the following result of $\nabla\times[(\nabla \times \boldsymbol{f})\times \boldsymbol{f}]$ in cylindrical coordinate: 
\begin{equation*}
	\begin{aligned}
		\nabla\times[(\nabla\times \boldsymbol{f})\times \boldsymbol{f}] = &\partial_z (j_zf_r -j_r f_z)\boldsymbol{e_r} + \left(\partial_z(j_{\theta}f_z - j_z f_{\theta})-\partial_r (j_rf_{\theta}-j_{\theta}f_r)\right)\boldsymbol{e_\theta}\\
		&\, + \frac{1}{r}\partial_r \left(r(j_zf_r-j_rf_z)\right)\boldsymbol{e_z},
	\end{aligned}
\end{equation*}
where
\begin{equation*}
	\left\{ \begin{aligned}
		&\boldsymbol{f}=f_r(t,r,z)\boldsymbol{e_r}+f_{\theta}(t,r,z)\boldsymbol{e_\theta}+f_z(t,r,z)\boldsymbol{e_z}, \\
		&\boldsymbol{j}= \nabla\times \boldsymbol{f}=j_r(t,r,z)\boldsymbol{e_r}+j_{\theta}(t,r,z)\boldsymbol{e_\theta}+j_z(t,r,z)\boldsymbol{e_z},
	\end{aligned}\right.
\end{equation*}
and 
\begin{equation*}
	j_r = -\partial_z f_{\theta},\qquad j_{\theta}=\partial_zf_r-\partial_rf_z,\qquad j_z=\frac{1}{r}\partial_r(rf_{\theta}),
\end{equation*}
one can easily rewrite the system \eqref{eq1.1} in the following way: 
\begin{equation}\label{A-N-MHD}
	\left\{ \begin{aligned}
		&\partial_t u_r + (u_r\partial_r + u_r\partial_z)u_r + \partial_r p = \mu(\Delta -\frac{1}{r^2})u_r - \frac{(h_{\theta})^2}{r},\\
		&\partial_t u_z + (u_r\partial_r + u_z \partial_z )u_z + \partial_z p =\mu\Delta u_z,\\
		&\partial_t h_{\theta} + (u_r\partial_r + u_z \partial_z )h_{\theta} - \frac{h_{\theta}u_r}{r} =\frac{\partial_z (h_{\theta})^2}{r},\\
		&\nabla\cdot \boldsymbol{u}= \partial_r u_r + \frac{u_r}{r} +\partial_z u_z =0.
	\end{aligned}\right.
\end{equation}
	
In this case, the vorticity $\boldsymbol{w}$ of the axially symmetric vector $\boldsymbol{u}$ is given by
\begin{equation*}
	\boldsymbol{w}=\mathrm{curl}\, \boldsymbol{u} = w_{\theta}\boldsymbol{e_{\theta}},
\end{equation*}
where 
\begin{equation*}
	w_{\theta}=\partial_z u_r - \partial_r u_z.
\end{equation*}
	
From \eqref{A-N-MHD} and the initial data \eqref{ini-data}, one can deduce that $\boldsymbol{w}$ satisfies: 
\begin{equation}\label{w-theta}
	\partial_t w_{\theta} + (u_r\partial_r+u_z\partial_z)w_{\theta} = \frac{u_r}{r}w_{\theta} - \frac{1}{r}\partial_z(h_{\theta})^2 + \mu(\Delta - \frac{1}{r^2})w_{\theta}.
\end{equation}
	
Recently, the Hall-MHD equations have received significant attention from mathematicians due to their crucial role in various plasma phenomena. Significant efforts have been dedicated to understanding their well-posedness and regularity. Chae, Degond and Liu in \cite{chae2014well} established the global existence of weak solutions and the local well-posedness for smooth solutions in Sobolev space $H^s(\mathbb{R}^3)$ with $s>5/2$. Subsequently, Dai \cite{dai2020local} extended the local well-posedness theory to $n$-dimensional case, in the space $H^s(\mathbb{R}^n)$ with $s>n/2$, for $n\geq 2$. For the non-resistive Hall-MHD system, Chae and Weng in \cite{chae2016singularity} showed it is not globally well-posed in any Sobolev space $H^s(\mathbb{R}^3)$ with $s>7/2$, even for smooth initial data. More recently, Li and Yang in \cite{LZYM2022} provided a single-component regularity criterion for the non-resistive axially symmetric Hall-MHD system, demonstrating that the regularity is correlated with a Beale-Kato-Majda (BKM) type condition on the reformulated magnetic quantity $\mathcal{H}: = \frac{h_{\theta}}{r}$.

Previous studies have established fundamental and significant insights into both resistive and non-resistive Hall-MHD equations, establishing criteria for either global regularity or finite-time blowup. However, quantitative estimates for the lifespan of strong solutions remain a relatively underdeveloped area. In the past few years, research related to the lifespan for PDEs of fluid dynamics has attracted considerable attention. Recently, Li-Zhou \cite{LZZT2024} gave an exact lifespan lower bound for axisymmetric incompressible Euler equations with a small swirl. For more related topics, see \cite{chemin2001lifespan,LP2002,HTLY2023,LZ2023,bae2025long} and references therein.
	
Our proof of main theorems is carried out mainly in the following two quantities:
\begin{align*}
		\Omega: =\frac{w_\theta}{r}, \quad \mathcal{H}: =\frac{h_\theta}{r}.
\end{align*}
Using $\eqref{A-N-MHD}_{1,2,3}$, one finds $(\Omega,\mathcal{H})$-system satisfies
\begin{equation}\label{OM}
	\left\{\begin{aligned}
		&\partial_t \Omega + \boldsymbol{u} \cdot \nabla \Omega = \mu\left(\Delta+\frac{2}{r} \partial_r\right) \Omega-\partial_z \mathcal{H}^2 , \\
		&\partial_t \mathcal{H}+\left(u_r \partial_r+u_z \partial_z\right) \mathcal{H}-2\mathcal{H} \partial_z \mathcal{H}=0.
	\end{aligned}\right.
\end{equation}

\subsection{Notations}
For the derivation that follows, we list the notations that will be used throughout the paper.
\begin{itemize}
	\item We use standard notations for Lebesgue and Sobolev functional spaces in $\mathbb{R}^3$: for $1 \leq p \leq \infty$ and $k \in \mathbb{N},$ $L^p$ denotes the Lebesgue space with norm
	\begin{equation*}
		\|f\|_{L^p}: = \begin{cases}\left(\int_{\mathbb{R}^3}|f(x)|^p \mathrm{~d} x\right)^{1 / p} & 1 \leq p<\infty, \\
			\underset{x \in \mathbb{R}^3}{\operatorname{ess} \sup }|f(x)| & p=\infty. \end{cases}
	\end{equation*}
	\item We write $A \lesssim B$ to indicate that $A \leq C B$ for some constant $C > 0$. Similarly, $A \simeq B$ means that both $A \lesssim B$ and $B \lesssim A$.
	\item The commutator of two operators $\mathcal{A}$ and $\mathcal{B}$ is defined as $[\mathcal{A}, \mathcal{B}] = \mathcal{A} \mathcal{B} - \mathcal{B} \mathcal{A}$.
	\item The notation $C_{a, b, \ldots}$ represents a positive constant that depends on the parameters $a, b, \ldots$, its value may vary from one occurrence to another.
	\item Let $\mathfrak{M} = (m_1, m_2, m_3)$ denote a multi-index with $m_1, m_2, m_3 \in \mathbb{N} \cup \{0\}$ and $|\mathfrak{M}| = m_1 + m_2 + m_3$. We define the higher-order derivative operator $\nabla^{\mathfrak{M}} = \partial_{x_1}^{m_1} \partial_{x_2}^{m_2} \partial_{x_3}^{m_3}$.
	\item $W^{k, p}$ denotes the usual Sobolev space with its norm
	\begin{align*}
		\|f\|_{W^{k, p}}: =\sum_{0 \leq|\mathfrak{M}| \leq k}\left\|\nabla^{\mathfrak{M}} f\right\|_{L^p}.
	\end{align*}
	\end{itemize}
We also simply denote $H^k$ and $\dot{H}^k$ instead of $W^{k, p}$ and $\dot{W}^{k, p}$ provided $p=2$.

\subsection{Main results}
Now we are ready to show the main results of the current paper. First, we consider the viscous case. We set the viscous coefficient $\mu=1$ without loss of generality. The results are as follows: 
\begin{theorem}[viscid case]\label{tvis}
	Let $(\boldsymbol{u}, \boldsymbol{h}) $ be a local smooth axially symmetric solution of system \eqref{eq1.1} with the initial data  $(\boldsymbol{u_0},\boldsymbol{h_0},\mathcal{H}_0)\in H^3(\mathbb{R}^3)$, $\boldsymbol{h}=h_{\theta}(t,r,z)\boldsymbol{e_{\theta}}$ satisfied $\nabla\cdot \boldsymbol{u_0} = \boldsymbol{u_0}\cdot \boldsymbol{e_{\theta}}=h_{r}=h_{z} \equiv 0$. Suppose
	\begin{equation*}
		\|\nabla \mathcal{H}_0\|_{L^{\infty}} = \varepsilon \ll 1,
	\end{equation*}
	then the solution $(\boldsymbol{u},\boldsymbol{h})$ keeps in $H^3$ when $t\leq T_*$, where $T_*$ satisfies: 
	\begin{equation}
		T_*\geq\frac{C_*}{(1+E_0)^\frac{4}{5}}\log(\log(\varepsilon^{-1}))^{4/5},
	\end{equation}
    here $C_*$ is a constant.
\end{theorem}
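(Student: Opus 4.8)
The plan is to run a continuation (bootstrap) argument whose single driving quantity is $a(t) := \|\nabla \mathcal{H}(t)\|_{L^{\infty}}$, starting from $a(0)=\varepsilon$. The point of departure is that the second equation of \eqref{OM} can be recast as a pure transport equation
\[
\partial_t \mathcal{H} + \bigl(u_r \partial_r + (u_z - 2\mathcal{H})\partial_z\bigr)\mathcal{H} = 0 ,
\]
so $\mathcal{H}$ is carried by the modified field $\widetilde{\boldsymbol{u}} = u_r \boldsymbol{e_r} + (u_z - 2\mathcal{H})\boldsymbol{e_z}$ and obeys the maximum principle $\|\mathcal{H}(t)\|_{L^{\infty}} = \|\mathcal{H}_0\|_{L^{\infty}} =: M_0$ for as long as the solution stays smooth. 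Consequently the only magnetic forcing in the $\Omega$-equation, namely $\partial_z \mathcal{H}^2 = 2\mathcal{H}\partial_z \mathcal{H}$, is controlled by $2M_0\, a(t)$; it is \emph{small} precisely as long as $a$ stays small. This is the structural reason a small initial magnetic gradient yields a long lifespan: the velocity sector decouples from the magnetic sector up to an $O(M_0 a)$ error.

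First I would record two elementary differential inequalities. Applying the maximum principle to the first equation of \eqref{OM} (whose elliptic part $\mu(\Delta + \tfrac{2}{r}\partial_r)$ generates a positivity-preserving semigroup) gives
\[
\|\Omega(t)\|_{L^{\infty}} \le \|\Omega_0\|_{L^{\infty}} + 2 M_0 \int_0^t a(s)\, ds ,
\]
while differentiating the transport form of the $\mathcal{H}$-equation in $r$ and $z$ and again invoking the maximum principle along $\widetilde{\boldsymbol{u}}$ yields
\[
a'(t) \lesssim \bigl(\|\nabla \boldsymbol{u}(t)\|_{L^{\infty}} + a(t)\bigr)\, a(t) .
\]
The crux is therefore to bound $\|\nabla \boldsymbol{u}\|_{L^{\infty}}$, for which I would use a logarithmic (Beale--Kato--Majda / Brezis--Gallouet type) interpolation bounding $\|\nabla \boldsymbol{u}\|_{L^{\infty}}$ by a slowly growing factor (built from $\|w_\theta\|_{L^{\infty}} = \|r\Omega\|_{L^{\infty}}$, itself controlled by the previous line) times the logarithm of the top-order energy.

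The heart of the argument is the top-order estimate. Setting $\mathcal{E}(t) := \|\boldsymbol{u}\|_{H^3}^2 + \|\boldsymbol{h}\|_{H^3}^2 + \|\mathcal{H}\|_{H^3}^2$, I would perform $H^3$ estimates on \eqref{A-N-MHD}, using the viscous dissipation on $\boldsymbol{u}$ and---since the system is \emph{non-resistive}---the transport structure of $\mathcal{H}$ to avoid any loss of derivative on the magnetic field. The decisive point is that the quasilinear Hall nonlinearity must be handled by commutator (Kato--Ponce) estimates against the transport operator; this is where the absence of magnetic diffusion bites hardest and where the cancellation must be exhibited carefully. The outcome should be a closed inequality of the form $\frac{d}{dt}\mathcal{E} \lesssim (\|\nabla \boldsymbol{u}\|_{L^{\infty}} + a)\,\mathcal{E} + (\text{lower order})$, from which, combined with the logarithmic bound, $\mathcal{E}$ grows at most double-exponentially while $a$ stays small, so that $\|\nabla \boldsymbol{u}\|_{L^{\infty}}$ grows at most super-linearly in $t$ (with an explicit $E_0$-weight in front).

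Finally I would close the bootstrap. Feeding the growth rate of $\|\nabla \boldsymbol{u}\|_{L^{\infty}}$ into $a(t) \le \varepsilon \exp\bigl(C\int_0^t \|\nabla \boldsymbol{u}\|_{L^{\infty}}\,ds\bigr)$ shows that $a$ reaches the fixed threshold $\delta_0$ below which all the preceding a priori bounds are self-consistent only after the accumulated integral $\int_0^t \|\nabla \boldsymbol{u}\|_{L^{\infty}}$ has attained size $\sim \log(\delta_0/\varepsilon) \sim \log(\varepsilon^{-1})$. Because that integral itself grows super-linearly in $t$ with an $E_0$-weight, inverting the relation turns the single logarithm $\log(\varepsilon^{-1})$ into an iterated logarithm for the time and produces the stated $T_* \gtrsim (1+E_0)^{-4/5}\,[\log\log(\varepsilon^{-1})]^{4/5}$; the exponent $4/5$ is exactly the bookkeeping of how the super-linear (power) growth of $\int \|\nabla \boldsymbol{u}\|_{L^{\infty}}$, together with its $E_0$-dependence, inverts. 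I expect the main obstacle to be the non-resistive $H^3$ estimate for the magnetic field: with no diffusion to absorb derivatives, the second-order Hall nonlinearity can only be tamed through the transport/commutator cancellation, and it is this step---coupled with keeping the $E_0$-dependence of the constants sharp enough to yield the $4/5$ power---that carries the real difficulty.
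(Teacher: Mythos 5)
Your bootstrap skeleton (the transport form of the $\mathcal{H}$-equation, the $L^\infty$ conservation of $\mathcal{H}$, and the Gr\"onwall estimate $a(t)\le\varepsilon\exp(C\int_0^t\|\nabla \boldsymbol{u}\|_{L^\infty}ds)$) matches the paper's Lemma \ref{L^pconsx} and Lemma \ref{nahH eges}, but the engine you propose for bounding $\|\nabla\boldsymbol{u}\|_{L^\infty}$ --- maximum principle for $\Omega$ plus a BKM/Brezis--Gallouet logarithmic inequality against the $H^3$ energy --- is the paper's \emph{inviscid} strategy (Section \ref{sec4}), and it cannot deliver the viscid rate. Two concrete problems. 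First, your step ``$\|w_\theta\|_{L^\infty}=\|r\Omega\|_{L^\infty}$, itself controlled by the previous line'' fails: a bound on $\|\Omega\|_{L^\infty}$ gives no control of $\|r\Omega\|_{L^\infty}$, since $r$ is unbounded; $w_\theta$ must instead be estimated through its own transport equation (Lemma \ref{h_c w_c}), which requires $\int_0^t\|\frac{u_r}{r}\|_{L^\infty}ds$ and $\|h_\theta\|_{L^\infty_tL^p}$ as inputs. Second, your growth accounting is wrong: your claimed energy inequality $\frac{d}{dt}\mathcal{E}\lesssim(\|\nabla\boldsymbol{u}\|_{L^\infty}+a)\mathcal{E}$ omits $\|\nabla\boldsymbol{h}\|_{L^\infty}$, which the Hall term forces into the $H^3$ estimate (Lemma \ref{nahH eges}) and which is \emph{not} small in Theorem \ref{tvis} (only $\nabla\mathcal{H}_0$ is); once the logarithmic inequality, the energy, and the Gr\"onwall bounds for $\|\nabla\boldsymbol{h}\|_{L^\infty}$ are coupled, $\int_0^t\|\nabla\boldsymbol{u}\|_{L^\infty}ds$ grows (iterated-)exponentially in $t$, exactly the cascade that produces the four-fold logarithm of the inviscid Theorem \ref{tivis} --- not the ``super-linear (power) growth'' from which you hope to extract the exponent $4/5$.

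The actual source of the exponent $4/5$ and of the prefactor $(1+E_0)^{-4/5}$ is a quantitative use of viscosity that your plan never exploits. The paper performs an $L^2$ energy estimate on the dissipative $\Omega$-equation, absorbing the forcing $\partial_z\mathcal{H}^2$ into the dissipation $\|\nabla\Omega\|_{L^2}^2$ (no smallness of $a$ needed there, and no exponential generated), so that interpolation in time yields the \emph{polynomial} bound $\int_0^t\|\frac{u_r}{r}\|_{L^\infty}ds\lesssim t^{3/4}\|\Omega_0\|_{L^2}+t^{5/4}\|\mathcal{H}_0\|_{L^4}^2$ (Proposition \ref{pro-u_r/r}); then maximal $L^2_tL^4$ parabolic regularity for the heat equation satisfied by $\boldsymbol{w}$ (Lemmas \ref{maximal re}--\ref{lemma3.6}) and Gagliardo--Nirenberg give \eqref{nabla u}, whence $\int_0^t\|\nabla\boldsymbol{u}\|_{L^\infty}ds\lesssim(1+E_0)^2\,t\exp\{CE_0(t^{5/4}+t^{3/4})\}$ with only a \emph{single} exponential. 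Feeding this into the Gr\"onwall bound for $\nabla\mathcal{H}$ closes the a priori assumption under a double exponential in $T_*^{5/4}$, and inverting $\exp\{\exp(C(1+E_0)T_*^{5/4})\}=\varepsilon^{-1}$ is precisely what produces $T_*\sim(1+E_0)^{-4/5}\log(\log(\varepsilon^{-1}))^{4/5}$. This dissipation-driven route also bypasses the BKM inequality and the $H^3$ energy inside the bootstrap entirely. Without it, your scheme would at best recover an iterated-logarithm lifespan of the inviscid type, so the statement as claimed is not proved by your route.
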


\qed

\begin{remark}
Roughly speaking, Theorem \ref{tvis} indicates the lifespan of system \eqref{A-N-MHD} can be arbitrarily large if the initial quantity $\mathcal{H}_0$ is close enough to a constant.
\end{remark}

\qed

Furthermore, we consider axially symmetric solutions of system \eqref{eq1.1} without the viscous term ($\mu=0$). For convenience, we rewrite the system \eqref{A-N-MHD} as follows:
\begin{align}\label{eq1.2}
	\left\{\begin{array}{l}
		\partial_t u_r+\left(u_r \partial_r+u_z \partial_z\right) u_r+\partial_r p=-\frac{\left(h_\theta\right)^2}{r}, \\
		\partial_t u_z+\left(u_r \partial_r+u_z \partial_z\right) u_z+\partial_z p=0, \\			\partial_t h_\theta+\left(u_r \partial_r+u_z \partial_z\right) h_\theta-\frac{h_\theta u_r}{r}=\frac{\partial_z\left(h_\theta\right)^2}{r}, \\
		\nabla \cdot \boldsymbol{u}=\partial_r u_r+\frac{u_r}{r}+\partial_z u_z=0.
	\end{array}\right.
\end{align}
	
By the first three equations of \eqref{eq1.2}, one deduces $w_\theta=\p_zu_r-\p_ru_z$ satisfies
\begin{align*}
	\partial_t w_\theta+\left(u_r \partial_r+u_z \partial_z\right) w_\theta=\frac{u_r}{r} w_\theta-\frac{1}{r} \partial_z\left(h_\theta\right)^2,
\end{align*}
and the inviscid reformulated $(\Omega, \mathcal{H})$-system follows 
\[ 
	\begin{aligned}
		\left\{\begin{array}{l}
			\partial_t \Omega+\boldsymbol{u} \cdot \nabla \Omega=-\partial_z \mathcal{H}^2, \\
			\partial_t \mathcal{H}+\left(u_r \partial_r+u_z \partial_z\right) \mathcal{H}-2 \mathcal{H} \partial_z \mathcal{H}=0.
		\end{array}\right.
	\end{aligned}
\]
    
Compared to viscid system, the lower bound on lifespan of inviscid system has more logarithmic factor. The result is given in the following: 
	
\begin{theorem}[inviscid case]\label{tivis}
	Let $(\boldsymbol{u}, \boldsymbol{h}) $ be a local smooth axially symmetric solution of \eqref{eq1.1} ($\mu=0$) with the initial data $\left(\boldsymbol{u_0}, \boldsymbol{h_0}, \mathcal{H}_0 \right) \in H^3\left(\mathbb{R}^3\right)$, $\boldsymbol{h}=h_{\theta}(t,r,z)\boldsymbol{e_{\theta}}$ satisfied $\nabla\cdot \boldsymbol{u_0} = \boldsymbol{u_0}\cdot \boldsymbol{e_{\theta}}=h_{r}=h_{z} \equiv 0$. Suppose
	\begin{equation}\label{CD1}
\|\nabla \boldsymbol{h}_0\|_{L^{\infty}}+\left\|\nabla\mathcal{H}_0\right\|_{L^{\infty}}=\ve \ll 1 ,
	\end{equation}
then the solution $(\boldsymbol{u}, \boldsymbol{h})(t, \cdot)$ keeps in $H^3$ when $t \leq T_*$, where $T_*$ satisfies: 
	\begin{align*}
		T_*=\frac{C_*}{1+E_0} \log\left(\log \left(\log \left(\log \left(\ve^{-1}\right)\right)\right)\right).
	\end{align*}
here $C_*$ is a constant.
\end{theorem}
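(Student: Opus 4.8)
The plan is to reduce persistence of the $H^3$ norm to a bootstrap on a handful of scalar quantities, exploiting that in the axisymmetric reduction the Hall nonlinearity becomes a first-order (transport-type) term rather than the borderline second-order operator it is in the original variables. The single structural fact driving everything is that the second equation of the inviscid $(\Omega,\mathcal H)$-system is a pure transport equation for $\mathcal H$ along the modified velocity $(u_r,\,u_z-2\mathcal H)$; by the maximum principle $\|\mathcal H(t)\|_{L^\infty}=\|\mathcal H_0\|_{L^\infty}$ for all $t$. Hence every magnetic forcing term in the fluid equations is proportional to a \emph{gradient} of $\mathcal H$ or of $\boldsymbol h$, and the hypothesis \eqref{CD1} makes all of them $O(\varepsilon)$ at the initial time.

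First I would fix the quantities to propagate: $a(t)=\|\nabla\mathcal H(t)\|_{L^\infty}$, $b(t)=\|\nabla\boldsymbol h(t)\|_{L^\infty}$, the vorticity amplitude measured through $\Omega$ (to avoid the vortex-stretching term carried by $w_\theta$), and the high-order energy $E(t)=\|(\boldsymbol u,\boldsymbol h,\mathcal H)(t)\|_{H^3}^2$. The forcing $-\partial_z\mathcal H^2=-2\mathcal H\,\partial_z\mathcal H$ in the $\Omega$-equation is bounded by $2\|\mathcal H_0\|_{L^\infty}a$, and the forcing $-\tfrac1r\partial_z(h_\theta)^2=-2\mathcal H\,\partial_z h_\theta$ in the $w_\theta$-equation by $2\|\mathcal H_0\|_{L^\infty}b$; differentiating the two transport equations gives $\dot a\lesssim\|\nabla\boldsymbol u\|_{L^\infty}a+a^2$ and a like inequality for $b$, so that $a,b$ grow at most like $\varepsilon\exp\!\big(C\int_0^t\|\nabla\boldsymbol u\|_{L^\infty}\big)$. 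For the velocity gradient I would use the logarithmic (Beale--Kato--Majda) interpolation $\|\nabla\boldsymbol u\|_{L^\infty}\lesssim 1+\|w_\theta\|_{L^\infty}\log(e+E^{1/2})$ together with the axisymmetric bound controlling $\|u_r/r\|_{L^\infty}$ by $\|\Omega\|_{L^\infty}$ and $\|\Omega\|_{L^2}$; the latter is essential, since controlling the stretching coefficient by the bootstrapped, bounded $\|\Omega\|_{L^\infty}$ rather than by $\|\nabla\boldsymbol u\|_{L^\infty}$ is what prevents a Riccati-type, $\varepsilon$-independent blow-up of $\|w_\theta\|_{L^\infty}$.

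The high-order energy estimate is where the non-resistive structure must be handled directly. In the original variables the Hall term costs two derivatives of $\boldsymbol h$ and cannot be absorbed without resistivity, but after the reduction the top-order contribution of $-2\mathcal H\partial_z\mathcal H$ (and of $2\mathcal H\partial_z h_\theta$) is a Burgers-type term whose leading piece integrates by parts, $\int\mathcal H\,\partial_z\nabla^3\mathcal H\,\nabla^3\mathcal H=-\tfrac12\int\partial_z\mathcal H\,|\nabla^3\mathcal H|^2$, so no derivative is lost and one obtains $\dot E\lesssim(\|\nabla\boldsymbol u\|_{L^\infty}+a+b)E$. Feeding these inequalities into one another along a bootstrap on $[0,T]$ under the assumption $\|\Omega\|_{L^\infty}\le 2\|\Omega_0\|_{L^\infty}$ produces a finite tower of exponentials: $E$ grows like iterated exponentials of $\int_0^t\|\nabla\boldsymbol u\|_{L^\infty}$, while $a$ and $b$ grow like $\varepsilon$ times that tower, and $\|\Omega\|_{L^\infty}\le\|\Omega_0\|_{L^\infty}+2\|\mathcal H_0\|_{L^\infty}\int_0^t a$ stays within the bootstrap window precisely as long as $\varepsilon\times(\text{tower})\lesssim 1$. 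Inverting the tower converts this threshold into the four-fold iterated logarithm of $\varepsilon^{-1}$ in the statement, and the BKM-type continuation criterion of \cite{LZYM2022} then upgrades this to persistence of the full $H^3$ norm up to $T_*$.

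The main obstacle is the bookkeeping of this tower together with the $r$-weighted estimates that tie it down. Concretely, one must keep every forcing term genuinely of size $O(\varepsilon)$ on the whole interval, which forces one to track $a$ and $b$ simultaneously, since $\boldsymbol h=r\mathcal H\,\boldsymbol{e}_{\theta}$ makes $\|\nabla\boldsymbol h\|_{L^\infty}$ and $\|\nabla\mathcal H\|_{L^\infty}$ genuinely different; and one must secure the stretching control $\|u_r/r\|_{L^\infty}\lesssim\|\Omega\|_{L^\infty}$ in the correct functional setting rather than in terms of $\|\nabla\boldsymbol u\|_{L^\infty}$. It is exactly the absence of viscosity that forces reliance on the logarithmic velocity-gradient interpolation and on bare transport estimates, stacking the extra exponentials; in the viscid case (Theorem \ref{tvis}) the parabolic smoothing truncates two levels of the tower, which is why that lifespan is two logarithms longer.
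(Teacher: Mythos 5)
Your proposal is correct and follows essentially the same route as the paper: the $L^p$ conservation of the transported quantity $\mathcal{H}$, the $L^p$ bound on $\Omega$ coming from the forcing $-\partial_z\mathcal{H}^2$ which feeds the axisymmetric control $\|u_r/r\|_{L^\infty}\lesssim\|\Omega\|_{L^2}^{1/2}\|\Omega\|_{L^6}^{1/2}$ and hence an exponential bound on $\|w_\theta\|_{L^\infty}$, the BMO-logarithmic estimate for $\|\nabla \boldsymbol{u}\|_{L^\infty}$, the commutator-based $H^3$ energy inequality in the $(\boldsymbol{u},\boldsymbol{h},\mathcal{H})$ variables, and a continuity argument whose closure condition $\varepsilon\times(\text{tower of four exponentials})\lesssim 1$ inverts to the four-fold iterated logarithm. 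The only deviations are cosmetic: you bootstrap on $\|\Omega\|_{L^\infty}$ where the paper assumes $t\sup_{0\le s\le t}\bigl(\|\nabla\boldsymbol{h}(s,\cdot)\|_{L^{\infty}}+\|\nabla\mathcal{H}(s,\cdot)\|_{L^{\infty}}\bigr)\le 1$, and your closing shorthand $\|u_r/r\|_{L^\infty}\lesssim\|\Omega\|_{L^\infty}$ should be read as the interpolation bound through $\|\Omega\|_{L^2}$ and $\|\Omega\|_{L^\infty}$ that you state correctly earlier.
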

	
\qed
	
\begin{remark}\label{rmk1}
We give an exact nontrivial example of the initial magnetic field that satisfies \eqref{CD1} since the condition looks strict.  It is inspired by the design of tokamaks \cite{tokamaks}. For $a,\,M>0$, let $\bm{h}_0=h_{\theta,0}\bm{e_\theta}$, where
\[
    h_{\theta,0}=M\phi\left(\frac{2r}{a}-3\right).
\]
Here $\phi$ is the standard smooth bump function:
\begin{align*}
    \phi(x)= \begin{cases}\exp \left(-\frac{1}{1-x^2}\right) & \text { if }\quad|x|<1, \\ 0 & \text { if }\quad |x| \geq 1 , \end{cases}
    \end{align*}
and thus $h_{\theta,0}$ is supported on $r\in[a,2a]$. Direct calculation shows
\[
    \begin{aligned}
        \p_rh_{\theta,0}&=\frac{2M}{a}\phi'\left(\frac{2r}{a}-3\right),\\
        \p_r\left(\frac{h_{\theta,0}}{r}\right)&=\frac{2M}{ar}\phi'\left(\frac{2r}{a}-3\right)-\frac{M}{r^2}\phi\left(\frac{2r}{a}-3\right).
    \end{aligned}
\]
This indicates
\[
\|\nabla \bm{h_0}\|_{L^\infty}\simeq \frac{M}{a},\quad \|\nabla\mathcal{H}_0\|_{L^\infty}\lesssim \frac{M}{a^2}.
\]
Thus by choosing $a=CM\ve^{-1}$, where $C>0$ is a universal constant, the condition \eqref{CD1} holds. 
    \end{remark}
    
\qed

\begin{remark}
By choosing $M>>1$, the size of magnetic field $\bm{h_0}$ constructed in Remark \ref{rmk1} can be arbitrarily large.
\end{remark}

\qed

The rest of this paper is organized as follows. To prepare to give a precise lower bound of lifespan of the non-resistive axially symmetric Hall-MHD system, we first compile some essential preliminaries. These include key lemmas on interpolation inequalities, commutator estimates, and the fundamental energy estimate for the system in Section \ref{Pre}. With these tools in hand, the main result of the viscid case is proved in Section \ref{sec3}, and the main result of inviscid case is proved in Section \ref{sec4}.
	
\section{Preliminaries}\label{Pre}
To begin with, we introduce some useful lemmas which will be frequently used in the proof of the main theorem.

We first present the Gagliardo-Nirenberg interpolation inequality, the proof of which is omitted here for brevity.
\begin{lemma}[Gagliardo-Nirenberg]{\label{G-N}}
	Fix $q, r \in[1, \infty]$ and $j, m \in \mathbb{N} \cup\{0\}$ with $j \leq m$. Suppose that $f\in L^q \cap \dot{W}^{m, r}$ and there exists a real number $\alpha \in[j / m, 1]$ such that
	\begin{align*}
		\frac{1}{p}=\frac{j}{3}+\alpha\left(\frac{1}{r}-\frac{m}{3}\right)+\frac{1-\alpha}{q}.
	\end{align*}
	Then $\boldsymbol{f} \in \dot{W}^{j, p}$ and there exists a constant $C>0$ such that
	\begin{align*}
		\left\|\nabla^j f\right\|_{L^p} \leq C\left\|\nabla^m f\right\|_{L^r}^\alpha\|f\|_{L^q}^{1-\alpha},
	\end{align*}
except the following two cases: 

(i) $j=0, m r<3$ and $q=\infty$; (In this case it is necessary to assume also that either $f \rightarrow 0$ at infinity, or $f \in L^s$ for some $s<\infty$.)
		
(ii) $1<r<\infty$ and $m-j-3 / r \in \mathbb{N}$. (In this case it is necessary to assume also that $\alpha<1$.)
\end{lemma}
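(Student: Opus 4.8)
The estimate is scale invariant: under $f(x)\mapsto f(\lambda x)$ both sides transform with the same power of $\lambda$ precisely because of the balance relation $\tfrac1p=\tfrac j3+\alpha(\tfrac1r-\tfrac m3)+\tfrac{1-\alpha}q$. Hence one may freely normalize, and it suffices to establish the inequality up to a universal constant. The plan is to run a Littlewood--Paley argument: write $f=\sum_{k\in\mathbb Z}\Delta_k f$ with $\Delta_k$ the homogeneous dyadic projector onto frequencies $|\xi|\simeq 2^k$, and estimate each block by Bernstein's inequalities, which for a frequency-localized function read $\|\nabla^j\Delta_k f\|_{L^p}\simeq 2^{jk}\|\Delta_k f\|_{L^p}$ and $\|\Delta_k f\|_{L^b}\lesssim 2^{3k(1/a-1/b)}\|\Delta_k f\|_{L^a}$ for $a\le b$.

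On each block I would record two competing bounds. Assuming $q\le p$ and $r\le p$ (the configuration relevant to the applications; the remaining orderings are handled analogously by duality and the homogeneous Littlewood--Paley characterization), Bernstein gives the low-frequency-friendly bound
\[
\|\nabla^j\Delta_k f\|_{L^p}\lesssim 2^{k\sigma_1}\|f\|_{L^q},\qquad \sigma_1:=j+3\Big(\tfrac1q-\tfrac1p\Big),
\]
and the high-frequency-friendly bound
\[
\|\nabla^j\Delta_k f\|_{L^p}\lesssim 2^{k\sigma_2}\|\nabla^m f\|_{L^r},\qquad \sigma_2:=j-m+3\Big(\tfrac1r-\tfrac1p\Big).
\]
A direct computation shows that the balance relation is exactly $(1-\alpha)\sigma_1+\alpha\sigma_2=0$, and under the generic (non-exceptional) hypotheses one has $\sigma_1>0>\sigma_2$, so the two geometric tails converge in the correct directions.

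I would then split the sum at a threshold frequency $2^K$, bound the low modes by the $\sigma_1$-estimate and the high modes by the $\sigma_2$-estimate, and use the triangle inequality $\|\nabla^j f\|_{L^p}\le\sum_k\|\nabla^j\Delta_k f\|_{L^p}$ to get
\[
\|\nabla^j f\|_{L^p}\lesssim 2^{K\sigma_1}\|f\|_{L^q}+2^{K\sigma_2}\|\nabla^m f\|_{L^r}.
\]
Optimizing over the continuous parameter $K$ balances the two terms; since $(1-\alpha)\sigma_1+\alpha\sigma_2=0$ forces $\sigma_1-\sigma_2=\sigma_1/\alpha$ and hence $\sigma_1/(\sigma_1-\sigma_2)=\alpha$, the minimized right-hand side is $\lesssim\|\nabla^m f\|_{L^r}^{\alpha}\,\|f\|_{L^q}^{1-\alpha}$, which is exactly the claim.

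The main obstacle is the endpoint behavior encoded in the excluded cases. When $\sigma_1=0$ or $\sigma_2=0$ the corresponding geometric series degenerates into a logarithmically divergent sum; this is precisely what occurs at the $L^\infty$ target in case (i) and at the critical Sobolev embedding in case (ii). There one cannot simply sum the dyadic blocks, and the extra hypotheses (namely $f\to0$ at infinity or $f\in L^s$ for some $s<\infty$, respectively $\alpha<1$) must be invoked to close the estimate, typically by replacing the crude triangle inequality with a square-function/Besov refinement or by interpolating against a slightly subcritical exponent. Verifying the two Bernstein bounds in the non-generic orderings of $(p,q,r)$ is the secondary point requiring care, which I would dispatch by duality.
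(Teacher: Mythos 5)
The paper offers no proof of this lemma at all: it is quoted as the classical Gagliardo--Nirenberg inequality with the proof explicitly ``omitted here for brevity,'' so there is no internal argument to compare against and your sketch must be judged on its own. The route you choose (homogeneous Littlewood--Paley blocks, two Bernstein bounds per block, a split at a threshold frequency, optimization over the threshold) is the standard modern proof in the non-endpoint regime, and your bookkeeping is correct: writing $\kappa:=m-3/r+3/q$ one checks $\sigma_1-\sigma_2=\kappa$ and, using the balance relation, $\sigma_1=\alpha\kappa$ and $\sigma_2=-(1-\alpha)\kappa$, so that $(1-\alpha)\sigma_1+\alpha\sigma_2=0$ and $\sigma_1/(\sigma_1-\sigma_2)=\alpha$, exactly as you assert.

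Two points are genuinely gappier than your hedging suggests. First, the claim that ``generically $\sigma_1>0>\sigma_2$'' is wrong as stated: by the identities above both signs are slaved to $\kappa$, which can be negative in perfectly admissible, non-excluded configurations --- take $j=1$, $m=2$, $r=1$, $q=\infty$, $\alpha=1/2$, giving $p=2$ and $\kappa=-1$; the inequality $\|\nabla f\|_{L^2}\le C\|\nabla^2 f\|_{L^1}^{1/2}\|f\|_{L^\infty}^{1/2}$ is true, but via $\int|\nabla f|^2=-\int f\,\Delta f\le\|f\|_{L^\infty}\|\Delta f\|_{L^1}$. When $\kappa<0$ the roles of the two bounds merely swap (a one-line repair), but when $\kappa=0$ both exponents vanish simultaneously; moreover $\alpha=1$ forces $\sigma_2=0$ identically, so \emph{every} pure Sobolev-embedding instance of the lemma degenerates for your scheme, not only the listed exceptions (i)--(ii). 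Second, and more seriously, the orderings $q>p$ or $r>p$ do occur for admissible parameters (the example just given has $q=\infty>p=2$), and there blockwise Bernstein is simply unavailable: Bernstein only lowers Lebesgue exponents into higher ones, and a single block of a bounded function (e.g.\ a modulated bump train) need not lie in $L^2$ at all, so there is no bound of $\|\nabla^j\Delta_k f\|_{L^p}$ by $\|f\|_{L^q}$ to record. Your appeal to ``duality'' does not convert one, as the integration-by-parts proof above shows: it is a different mechanism, and the full stated range is classically reached by Nirenberg's one-dimensional pointwise interpolation plus induction on $m$, not by frequency decomposition. A smaller point: the homogeneous identity $f=\sum_k\Delta_k f$ holds only modulo polynomials, so you should note that $f\in L^q$ with $q<\infty$ (or the decay hypothesis in case (i)) removes that ambiguity before applying the triangle inequality. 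In the parameter regime the paper actually uses (interpolations with $q,r\le p$ away from the critical line) your argument closes; as a proof of the lemma in the stated generality it is a correct core surrounded by cases it cannot reach.
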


\qed

Next, we present the following estimate for the triple product form that will be frequently used in the final proof.
\begin{lemma}{\label{Kato-Ponce}}
	Let $m \in \mathbb{N}$ and $m \geq 2, \boldsymbol{f}, \boldsymbol{g}, \boldsymbol{k} \in C_0^{\infty}\left(\mathbb{R}^3\right)$. The following estimates hold: 
	\begin{align}{\label{lemKT1}}
		\left|\int_{\mathbb{R}^3}\left[\nabla^m, \boldsymbol{f} \cdot \nabla\right] \boldsymbol{g} \nabla^m \boldsymbol{k} d x\right| \leq C\left\|\nabla^m(\boldsymbol{f}, \boldsymbol{g}, \boldsymbol{k})\right\|_{L^2}^2\|\nabla(\boldsymbol{f}, \boldsymbol{g})\|_{L^{\infty}} ;
	\end{align}
\end{lemma}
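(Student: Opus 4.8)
The plan is to reduce the estimate to a Moser-type product bound on the commutator in $L^2$, followed by a single application of Cauchy--Schwarz. First I would expand the commutator by the Leibniz rule. Writing
\[
[\nabla^m,\boldsymbol{f}\cdot\nabla]\boldsymbol{g} = \nabla^m(\boldsymbol{f}\cdot\nabla\boldsymbol{g}) - \boldsymbol{f}\cdot\nabla(\nabla^m\boldsymbol{g}),
\]
the Leibniz expansion of the first term contains a distinguished summand in which all $m$ derivatives fall on $\nabla\boldsymbol{g}$; this is exactly $\boldsymbol{f}\cdot\nabla\nabla^m\boldsymbol{g}$ and it cancels the second term. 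What survives is a finite sum of terms of the schematic form $\nabla^a\boldsymbol{f}\cdot\nabla^{m-a+1}\boldsymbol{g}$ with $1\le a\le m$, each carrying at least one derivative on $\boldsymbol{f}$, the binomial coefficients being absorbed into $C$.

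The core step is to bound each summand in $L^2$. I would apply H\"older's inequality with exponents $\tfrac1p+\tfrac1q=\tfrac12$ chosen as $p=\tfrac{2(m-1)}{a-1}$ and $q=\tfrac{2(m-1)}{m-a}$, and then interpolate each factor by the Gagliardo--Nirenberg inequality (Lemma~\ref{G-N}) applied to the functions $\nabla\boldsymbol{f}$ and $\nabla\boldsymbol{g}$. Concretely, writing $\nabla^a\boldsymbol{f}=\nabla^{a-1}(\nabla\boldsymbol{f})$ and interpolating between $L^\infty$ and $\dot W^{m-1,2}$ gives
\[
\|\nabla^a\boldsymbol{f}\|_{L^p}\lesssim \|\nabla\boldsymbol{f}\|_{L^\infty}^{1-\beta}\|\nabla^m\boldsymbol{f}\|_{L^2}^{\beta},\qquad \beta=\tfrac{a-1}{m-1},
\]
with the symmetric bound for $\nabla^{m-a+1}\boldsymbol{g}=\nabla^{m-a}(\nabla\boldsymbol{g})$ carrying exponent $\gamma=\tfrac{m-a}{m-1}$. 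The key arithmetic is that $\beta+\gamma=1$ and $(1-\beta)+(1-\gamma)=1$, so the product of the four factors has total weight one in the top-order $L^2$ norms and total weight one in the first-order $L^\infty$ norms. Absorbing $\|\nabla^m\boldsymbol{f}\|_{L^2},\|\nabla^m\boldsymbol{g}\|_{L^2}$ into $\|\nabla^m(\boldsymbol{f},\boldsymbol{g})\|_{L^2}$ and $\|\nabla\boldsymbol{f}\|_{L^\infty},\|\nabla\boldsymbol{g}\|_{L^\infty}$ into $\|\nabla(\boldsymbol{f},\boldsymbol{g})\|_{L^\infty}$ then yields
\[
\big\|[\nabla^m,\boldsymbol{f}\cdot\nabla]\boldsymbol{g}\big\|_{L^2}\lesssim \|\nabla^m(\boldsymbol{f},\boldsymbol{g})\|_{L^2}\,\|\nabla(\boldsymbol{f},\boldsymbol{g})\|_{L^\infty}.
\]
Pairing against $\nabla^m\boldsymbol{k}$ by Cauchy--Schwarz and using $\|\nabla^m(\boldsymbol{f},\boldsymbol{g})\|_{L^2}\|\nabla^m\boldsymbol{k}\|_{L^2}\le \|\nabla^m(\boldsymbol{f},\boldsymbol{g},\boldsymbol{k})\|_{L^2}^2$ gives \eqref{lemKT1}.

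The main thing to watch, rather than a genuine obstacle, is that the chosen exponents are admissible in Lemma~\ref{G-N}. One checks that $\alpha=\beta$ and $\alpha=\gamma$ satisfy the required scaling identity and lie in $[j/m,1]$ (in fact they hit the lower endpoint $j/m$), and that the exceptional cases do not interfere: case~(ii) never triggers because the relevant quantity $m-a-3/2$ is a half-integer and hence never lies in $\mathbb{N}$, while the $q=\infty$ borderline of case~(i) is harmless since $\boldsymbol{f},\boldsymbol{g},\boldsymbol{k}\in C_0^\infty$ decay at infinity. The extreme indices $a=1$ and $a=m$ need no interpolation at all---one factor is already $\|\nabla\boldsymbol{f}\|_{L^\infty}$ or $\|\nabla^m\boldsymbol{g}\|_{L^2}$---so they may be treated separately to avoid the $j=0$ endpoint. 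Finally, the hypothesis $m\ge 2$ is used precisely to keep $m-1\ge 1$, so that the interpolation weights $\beta,\gamma$ are well defined.
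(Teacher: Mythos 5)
Your argument is correct, but it takes a different route from the paper. The paper's proof is two lines: Cauchy--Schwarz pairs the commutator against $\nabla^m \boldsymbol{k}$, and the $L^2$ commutator bound
\begin{equation*}
\left\|\left[\nabla^m, \boldsymbol{f} \cdot \nabla\right] \boldsymbol{g}\right\|_{L^2} \leq C\left(\|\nabla \boldsymbol{f}\|_{L^{\infty}}\left\|\nabla^m \boldsymbol{g}\right\|_{L^2}+\|\nabla \boldsymbol{g}\|_{L^{\infty}}\left\|\nabla^m \boldsymbol{f}\right\|_{L^2}\right)
\end{equation*}
is simply cited from Kato--Ponce \cite{KTPG2010}. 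You instead re-prove that commutator estimate from scratch: Leibniz expansion with cancellation of the top-order term in $\boldsymbol{g}$, H\"older with the exponents $p=\tfrac{2(m-1)}{a-1}$, $q=\tfrac{2(m-1)}{m-a}$, and Gagliardo--Nirenberg interpolation (Lemma~\ref{G-N}) of $\nabla^{a-1}(\nabla\boldsymbol{f})$ and $\nabla^{m-a}(\nabla\boldsymbol{g})$ between $L^\infty$ and $\dot H^{m-1}$, with the weights $\beta=\tfrac{a-1}{m-1}$, $\gamma=\tfrac{m-a}{m-1}$ summing to one exactly as needed; the final absorption into $\|\nabla^m(\boldsymbol{f},\boldsymbol{g},\boldsymbol{k})\|_{L^2}^2$ by Cauchy--Schwarz matches the paper. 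Your arithmetic checks out (the scaling identity of Lemma~\ref{G-N} holds with $\alpha$ at the lower endpoint $j/m$, case~(ii) is excluded since $m-a-3/2\notin\mathbb{N}$, and the endpoint summands $a=1$, $a=m$ need no interpolation), and this is the classical Moser-type argument underlying the cited result. What you gain is a self-contained proof relying only on Lemma~\ref{G-N}, with the exceptional cases verified explicitly; what the paper gains by citing is brevity and, implicitly, coverage of the multi-index bookkeeping (each $\nabla^{\mathfrak{M}}$ with $|\mathfrak{M}|=m$) that your schematic Leibniz expansion glosses over but which causes no difficulty.
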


\begin{proof}
    We apply the Hölder inequality, one derives
	\begin{align}{\label{lemKT2}}
		\left|\int_{\mathbb{R}^3}\left[\nabla^m, \boldsymbol{f} \cdot \nabla\right] \boldsymbol{g} \nabla^m \boldsymbol{k} d x\right| \leq\left\|\left[\nabla^m, \boldsymbol{f} \cdot \nabla\right] \boldsymbol{g}\right\|_{L^2}\left\|\nabla^m \boldsymbol{k}\right\|_{L^2}.
	\end{align}
	Due to the commutator estimate by Kato-Ponce \cite{KTPG2010}, it follows that
	\begin{align}{\label{lemKT3}}
		\left\|\left[\nabla^m, \boldsymbol{f} \cdot \nabla\right] \boldsymbol{g}\right\|_{L^2} \leq C\left(\|\nabla \boldsymbol{f}\|_{L^{\infty}}\left\|\nabla^m \boldsymbol{g}\right\|_{L^2}+\|\nabla \boldsymbol{g}\|_{L^{\infty}}\left\|\nabla^m \boldsymbol{f}\right\|_{L^2}\right).
		\end{align}
	Then \eqref{lemKT1} follows from substituting \eqref{lemKT3} in \eqref{lemKT2}.
\end{proof} 
    
The following is the fundamental energy estimate and the $L^p$ conservation of $\mathcal{H}$.
\begin{lemma}\label{L^pconsx}
	Define $\mathcal{H}: =\frac{h_\theta}{r}$. Let $(\boldsymbol{u}, \boldsymbol{h}) \in H^3$ be the solution of \eqref{A-N-MHD}, then we have the following states in different case: 
    For $p \in[2, \infty]$ and $t \in(0, \infty)$, the viscid case ensures the following estimate holds: 
    \begin{equation}\label{viscid L^p con-H}
        \|(\boldsymbol{u},\boldsymbol{h})(t,\cdot)\|_{L^2}^2+2\int_0^t \|\nabla\boldsymbol{u}(s,\cdot)\|^2_{L^2}ds\leq \|\boldsymbol{u_0},\boldsymbol{h_0}\|_{L^2}^2.
    \end{equation}
In the inviscid case, \eqref{viscid L^p con-H} reduces to
    \begin{align}{\label{Fundamental eq1}}
	   \begin{gathered}
			\|(\boldsymbol{u}, \boldsymbol{h})(t, \cdot)\|_{L^2}^2 \leq\left\|\left(\boldsymbol{u_0}, \boldsymbol{h_0}\right)\right\|_{L^2}^2 .
		\end{gathered}
	\end{align}
Both viscid case and inviscid case hold
    \begin{equation}\label{L^p-con}
        \|\mathcal{H}(t, \cdot)\|_{L^p}=\left\|\mathcal{H}_0\right\|_{L^p} .
    \end{equation}
\end{lemma}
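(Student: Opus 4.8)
The plan is to establish the two energy identities and the $L^p$-conservation by independent arguments: the former through $L^2$ pairings of the momentum and magnetic equations, the latter through the transport structure of the $\mathcal{H}$-equation. Since viscosity enters only the momentum equation, I would prove the viscid statement \eqref{viscid L^p con-H} and recover the inviscid case \eqref{Fundamental eq1} by setting $\mu=0$. I would work directly with the primitive form \eqref{eq1.1}. Testing the $\boldsymbol{u}$-equation against $\boldsymbol{u}$ and the $\boldsymbol{h}$-equation against $\boldsymbol{h}$ over $\mathbb{R}^3$, the transport terms $\int (\boldsymbol{u}\cdot\nabla\boldsymbol{u})\cdot\boldsymbol{u}$, $\int(\boldsymbol{u}\cdot\nabla\boldsymbol{h})\cdot\boldsymbol{h}$ and the pressure term $\int\nabla p\cdot\boldsymbol{u}$ all vanish after integration by parts because $\nabla\cdot\boldsymbol{u}=0$, while the viscous term contributes $\mu\|\nabla\boldsymbol{u}\|_{L^2}^2$. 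The two coupling terms combine, using $\nabla\cdot\boldsymbol{h}=0$, into
\[
\int_{\mathbb{R}^3}(\boldsymbol{h}\cdot\nabla\boldsymbol{h})\cdot\boldsymbol{u}+(\boldsymbol{h}\cdot\nabla\boldsymbol{u})\cdot\boldsymbol{h}\,dx=\int_{\mathbb{R}^3}\boldsymbol{h}\cdot\nabla(\boldsymbol{u}\cdot\boldsymbol{h})\,dx=-\int_{\mathbb{R}^3}(\nabla\cdot\boldsymbol{h})(\boldsymbol{u}\cdot\boldsymbol{h})\,dx=0.
\]

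The only genuinely Hall-specific point is that the Hall term makes no contribution to the magnetic energy, which I would record via the vector identity
\[
\int_{\mathbb{R}^3}\nabla\times[(\nabla\times\boldsymbol{h})\times\boldsymbol{h}]\cdot\boldsymbol{h}\,dx=\int_{\mathbb{R}^3}[(\nabla\times\boldsymbol{h})\times\boldsymbol{h}]\cdot(\nabla\times\boldsymbol{h})\,dx=0,
\]
where the first equality is integration by parts (the curl is self-adjoint up to boundary terms that vanish by $H^3$ decay) and the second holds pointwise since $(\nabla\times\boldsymbol{h})\times\boldsymbol{h}$ is orthogonal to $\nabla\times\boldsymbol{h}$. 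Adding the two tested equations then yields the clean identity $\frac12\frac{d}{dt}\big(\|\boldsymbol{u}\|_{L^2}^2+\|\boldsymbol{h}\|_{L^2}^2\big)+\mu\|\nabla\boldsymbol{u}\|_{L^2}^2=0$. Integrating in time with $\mu=1$ gives \eqref{viscid L^p con-H} (indeed with equality, so the stated inequality is immediate), and the same computation with $\mu=0$ gives \eqref{Fundamental eq1}.

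For the $L^p$-conservation \eqref{L^p-con}, which is identical in both cases since the $\mathcal{H}$-equation is viscosity-independent, I would recast it as a transport equation $\partial_t\mathcal{H}+\tilde{\boldsymbol{u}}\cdot\nabla\mathcal{H}=0$ with the effective velocity $\tilde{\boldsymbol{u}}=u_r\boldsymbol{e_r}+(u_z-2\mathcal{H})\boldsymbol{e_z}$. The main obstacle is precisely that $\tilde{\boldsymbol{u}}$ fails to be divergence-free: using $\nabla\cdot\boldsymbol{u}=0$ one computes $\nabla\cdot\tilde{\boldsymbol{u}}=-2\partial_z\mathcal{H}$, so the usual ``divergence-free transport preserves every $L^p$-norm'' reasoning does not apply directly. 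Multiplying by $p|\mathcal{H}|^{p-2}\mathcal{H}$ and integrating for finite $p$ gives
\[
\frac{d}{dt}\|\mathcal{H}\|_{L^p}^p=-\int_{\mathbb{R}^3}\tilde{\boldsymbol{u}}\cdot\nabla|\mathcal{H}|^p\,dx=\int_{\mathbb{R}^3}(\nabla\cdot\tilde{\boldsymbol{u}})|\mathcal{H}|^p\,dx=-2\int_{\mathbb{R}^3}(\partial_z\mathcal{H})|\mathcal{H}|^p\,dx,
\]
and the resolution is that the remaining integrand is itself a perfect derivative, $(\partial_z\mathcal{H})|\mathcal{H}|^p=\tfrac{1}{p+1}\partial_z\big(|\mathcal{H}|^p\mathcal{H}\big)$, so it integrates to zero in $z$. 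Hence $\|\mathcal{H}(t)\|_{L^p}=\|\mathcal{H}_0\|_{L^p}$ for every $p\in[2,\infty)$; the endpoint $p=\infty$ follows either by passing to the limit $p\to\infty$ or, more directly, from the transport maximum principle, since $\mathcal{H}$ is constant along the characteristics of $\tilde{\boldsymbol{u}}$. All integrations by parts are justified by the $H^3$ regularity and decay of the solution, together with the regularity of $\mathcal{H}=h_\theta/r$ near the axis (where $h_\theta$ vanishes), so no boundary contributions arise.
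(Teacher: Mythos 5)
Your proposal is correct and takes essentially the approach the paper intends: the paper declares these ``standard'' energy estimates and omits all details, and your computation (testing \eqref{eq1.1} with $(\boldsymbol{u},\boldsymbol{h})$, the Hall term dropping out by orthogonality of $(\nabla\times\boldsymbol{h})\times\boldsymbol{h}$ to $\nabla\times\boldsymbol{h}$, and $L^p$ estimates on the $\mathcal{H}$-equation $\eqref{OM}_2$) supplies exactly those details. In particular you correctly identify and resolve the one genuinely delicate point, namely that the effective velocity $\tilde{\boldsymbol{u}}$ is not divergence-free yet the residual term still vanishes because $(\partial_z\mathcal{H})|\mathcal{H}|^p=\tfrac{1}{p+1}\partial_z\bigl(|\mathcal{H}|^p\mathcal{H}\bigr)$ is a perfect $z$-derivative, which is precisely what upgrades a mere $L^p$ bound to the exact conservation \eqref{L^p-con}.
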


\begin{proof}
    Inequalities \eqref{viscid L^p con-H} and \eqref{Fundamental eq1} follow from the standard $L^2$ energy estimate of the system \eqref{A-N-MHD}, by $\eqref{A-N-MHD}_3$ in different cases, we have the same estimate of $L^p$ conservation of $\mathcal{H}$, we all omit the details here.
\end{proof}

Not influenced by the loss of the viscous term, the next three lemmas are devoted to some basic estimates of the system \eqref{eq1.1}. These lemmas will be used both in the proof of Theorem \ref{tvis} and Theorem \ref{tivis}.
\begin{lemma}\label{h_c w_c}
	Let $(\boldsymbol{u}, \boldsymbol{h}) $ be a local smooth axially symmetric solution of the system \eqref{eq1.1} on $t\in[0,T)$ with the initial data  $(\boldsymbol{u_0},\boldsymbol{h_0},\mathcal{H}_0)\in H^3(\mathbb{R}^3)$, $\boldsymbol{h}=h_{\theta}(t,r,z)\boldsymbol{e_{\theta}}$ satisfied $\nabla\cdot \boldsymbol{u_0} = \boldsymbol{u_0}\cdot \boldsymbol{e_{\theta}}=h_{r}=h_{z} \equiv 0$. One has the following $L^{\infty}_{t}L^p$ estimates of $h_{\theta}$ and $w_{\theta}$, for all $p \in (1, \infty]$:
	\begin{equation*}
		\begin{aligned}
			\|h_{\theta}(t,\cdot)\|_{L^p} &\leq \|\boldsymbol{h_0}\|_{L^p}\exp\left( C\int_0^t\left[ \|\frac{u_r}{r}(s,\cdot)\|_{L^{\infty}} + \|\partial_z \mathcal{H}(s,\cdot)\|_{L^{\infty}} \right]ds \right),\\
			\|w_{\theta}(t,\cdot)\|_{L^p} &\leq (\|\boldsymbol{w_0}\|_{L^p} + \left\|h_\theta\right\|_{L_{t}^{\infty} L^p}\left\|\partial_z \mathcal{H}\right\|_{L_{t}^1 L^{\infty}}) \\
			&\qquad \times\exp\left( \int_0^t \|\frac{u_r}{r}(s,\cdot)\|_{L^{\infty}}ds \right).
		\end{aligned}
	\end{equation*}
\end{lemma}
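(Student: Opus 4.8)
The plan is to derive $L^p$ transport--type estimates for $h_\theta$ and $w_\theta$ directly from the evolution equations in system \eqref{A-N-MHD}, controlling the growth through Gr\"onwall's inequality. The key observation is that both quantities satisfy transport equations along the divergence-free velocity field $\boldsymbol{u}$, plus lower-order terms. First I would multiply $\eqref{A-N-MHD}_3$, namely $\partial_t h_\theta + (u_r\partial_r + u_z\partial_z)h_\theta - \frac{h_\theta u_r}{r} = \frac{\partial_z(h_\theta)^2}{r}$, by $|h_\theta|^{p-2}h_\theta$ and integrate over $\mathbb{R}^3$. The transport term $\int (u_r\partial_r + u_z\partial_z)h_\theta \,|h_\theta|^{p-2}h_\theta\,dx$ vanishes (or is controlled) after integration by parts using $\nabla\cdot\boldsymbol{u}=0$, which is the standard mechanism by which pure transport preserves $L^p$ norms. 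The remaining two terms must be bounded: the stretching term $\frac{h_\theta u_r}{r}$ contributes $\|\frac{u_r}{r}\|_{L^\infty}\|h_\theta\|_{L^p}^p$ to $\frac{d}{dt}\frac{1}{p}\|h_\theta\|_{L^p}^p$, and the Hall term $\frac{\partial_z(h_\theta)^2}{r}=2\mathcal{H}\,\partial_z h_\theta$ should be rewritten so that the $\partial_z h_\theta$ factor pairs with $h_\theta$ and can be absorbed, leaving a factor $\|\partial_z\mathcal{H}\|_{L^\infty}$ after an integration by parts. This yields the differential inequality $\frac{d}{dt}\|h_\theta\|_{L^p} \le C\bigl(\|\frac{u_r}{r}\|_{L^\infty}+\|\partial_z\mathcal{H}\|_{L^\infty}\bigr)\|h_\theta\|_{L^p}$, and Gr\"onwall gives the claimed exponential bound.

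For the second estimate, I would work from the vorticity equation $\partial_t w_\theta + (u_r\partial_r+u_z\partial_z)w_\theta = \frac{u_r}{r}w_\theta - \frac{1}{r}\partial_z(h_\theta)^2 + \mu(\Delta-\frac{1}{r^2})w_\theta$ recorded in \eqref{w-theta}. The same $L^p$ testing procedure applies: the convection term drops by incompressibility, the viscous operator $\mu(\Delta-\frac{1}{r^2})$ contributes a nonpositive (dissipative) term when tested against $|w_\theta|^{p-2}w_\theta$ and is therefore discarded in the upper bound (this is precisely why the estimate is insensitive to the presence or absence of viscosity, matching the remark preceding the lemma). The vortex-stretching term gives $\|\frac{u_r}{r}\|_{L^\infty}\|w_\theta\|_{L^p}$, while the source term $-\frac{1}{r}\partial_z(h_\theta)^2 = -2\mathcal{H}\,\partial_z h_\theta$ is treated as an inhomogeneity and bounded in $L^p$ by $\|h_\theta\|_{L^p}\|\partial_z\mathcal{H}\|_{L^\infty}$ after suitable manipulation. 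This produces $\frac{d}{dt}\|w_\theta\|_{L^p}\le \|\frac{u_r}{r}\|_{L^\infty}\|w_\theta\|_{L^p}+C\|h_\theta\|_{L^p}\|\partial_z\mathcal{H}\|_{L^\infty}$, and the inhomogeneous Gr\"onwall inequality yields the stated bound with the factor $\|h_\theta\|_{L^\infty_t L^p}\|\partial_z\mathcal{H}\|_{L^1_t L^\infty}$ multiplying the exponential.

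The main technical obstacle I anticipate is the careful treatment of the Hall and source terms involving $\frac{\partial_z(h_\theta)^2}{r}$, specifically the manipulation needed to extract the factor $\partial_z\mathcal{H}$ rather than $\partial_z h_\theta$. Writing $\frac{1}{r}\partial_z(h_\theta)^2 = \frac{2h_\theta}{r}\partial_z h_\theta$ and recognizing that $\partial_z h_\theta = \partial_z(r\mathcal{H}) = r\partial_z\mathcal{H}$ (since $r$ is independent of $z$) is the cleanest route: this gives $\frac{1}{r}\partial_z(h_\theta)^2 = 2h_\theta\,\partial_z\mathcal{H}$, so the term is pointwise bounded by $2\|\partial_z\mathcal{H}\|_{L^\infty}|h_\theta|$, immediately delivering the desired $\|h_\theta\|_{L^p}\|\partial_z\mathcal{H}\|_{L^\infty}$ control without any integration by parts. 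I would verify that this identity is used consistently in both estimates. A secondary subtlety is justifying that the convection and viscous terms behave as claimed under the $L^p$ pairing; for the endpoint $p=\infty$ one argues via the characteristic flow and a maximum-principle comparison rather than by direct integration, but the resulting Gr\"onwall structure is identical, so the two cases can be unified in the statement.
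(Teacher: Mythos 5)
Your proposal is correct and is essentially the argument the paper intends: the paper states Lemma \ref{h_c w_c} without proof, but its analogous computations (the proofs of Lemma \ref{nahH eges} and of Proposition \ref{Ompro1}) follow exactly your scheme --- $L^p$ testing, vanishing of the transport term via $\nabla\cdot\boldsymbol{u}=0$, discarding the dissipative contribution of $\mu(\Delta-\frac{1}{r^2})$ tested against $|w_\theta|^{p-2}w_\theta$, and (inhomogeneous) Gr\"onwall --- and your pointwise identity $\frac{1}{r}\partial_z(h_\theta)^2=2h_\theta\,\partial_z\mathcal{H}$ correctly resolves the only delicate point, yielding both stated bounds. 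The single cosmetic difference is at the endpoint $p=\infty$: the paper's pattern elsewhere is to keep the constants $p$-independent and let $p\to\infty$, whereas you argue by characteristics and a maximum principle; both routes are valid (with the usual approximation caveat for testing with $|h_\theta|^{p-2}h_\theta$ when $1<p<2$).
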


\qed
	
The next lemma states the $L^p(1<p<\infty)$ norm of $\nabla \frac{v_r}{r}$ could be controlled by the $L^p$ norm of $\frac{w_\theta}{r}$, whose proof could be found in (\cite{MCZX2013}, Proposition 2.5) and (\cite{LZ2015} equation A.5).

\begin{lemma}{\label{nav_r cl}}
	Define $\Omega: =\frac{w_\theta}{r}$. For $1<p<\infty$, there exists an absolute constant $C_p>0$ such that
	\begin{align*}
		\left\|\nabla \frac{u_r}{r}(t, \cdot)\right\|_{L^p} \leq C_p\|\Omega(t, \cdot)\|_{L^p} .
	\end{align*}
\end{lemma}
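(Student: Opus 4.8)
The plan is to reduce the estimate to the boundedness of a singular integral (Calderón–Zygmund type) operator on $L^p$ for $1<p<\infty$. The key observation is that the quantity $\frac{u_r}{r}$ and the vorticity component $w_\theta$ are linked through the stream function of the axisymmetric no-swirl velocity field. First I would introduce the stream function $\psi$ defined (up to normalization) by $u_r=-\partial_z\psi$ and $u_z=\frac{1}{r}\partial_r(r\psi)$, so that the divergence-free condition $\partial_r u_r+\frac{u_r}{r}+\partial_z u_z=0$ is automatically satisfied. Computing the vorticity $w_\theta=\partial_z u_r-\partial_r u_z$ in terms of $\psi$ yields an elliptic equation of the form $-\bigl(\Delta-\frac{1}{r^2}\bigr)\psi=w_\theta$, which is precisely the axisymmetric scalar Laplacian acting on the $\theta$-component. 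Equivalently, viewing $\boldsymbol{u}$ as a genuine $\mathbb{R}^3$ vector field, one has $\boldsymbol{u}=\nabla\times(\psi\,\boldsymbol{e_\theta})$ with $-\Delta(\psi\,\boldsymbol{e_\theta})=\boldsymbol{w}$ in three dimensions.

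The central step is to express $\nabla\frac{u_r}{r}$ through $\Omega=\frac{w_\theta}{r}$ via an explicit formula obtained by inverting the elliptic operator. Writing $\frac{u_r}{r}=-\frac{1}{r}\partial_z\psi$ and recalling $w_\theta=r\Omega$, I would use the Biot–Savart representation $\psi\,\boldsymbol{e_\theta}=(-\Delta)^{-1}\boldsymbol{w}$ together with the relations among the scalar components. Taking one more derivative and organizing the resulting kernel, one finds that each component of $\nabla\frac{u_r}{r}$ is given by a convolution-type operator applied to $\Omega$ whose symbol is homogeneous of degree zero; the factors of $\frac{1}{r}$ that appear are absorbed precisely because both the source and the target are scaled by $r^{-1}$, so no genuine boundary singularity at $r=0$ survives. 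This structural cancellation is exactly what the references \cite{MCZX2013} and \cite{LZ2015} exploit, and I would follow their computation to present $\nabla\frac{u_r}{r}$ as a sum of Riesz-type transforms acting on $\Omega$.

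The final step is a direct application of the Calderón–Zygmund theorem: since the operator carrying $\Omega$ to $\nabla\frac{u_r}{r}$ is a classical singular integral operator with a Calderón–Zygmund kernel, it is bounded on $L^p(\mathbb{R}^3)$ for every $1<p<\infty$, which gives $\|\nabla\frac{u_r}{r}\|_{L^p}\leq C_p\|\Omega\|_{L^p}$ with a constant $C_p$ depending only on $p$ and blowing up as $p\to 1$ or $p\to\infty$ (hence the exclusion of the endpoints). I expect the main obstacle to be the careful handling of the apparent singularity of the factor $\frac{1}{r}$ near the symmetry axis: one must verify that the weight $r^{-1}$ attached to $u_r$ matches the weight attached to $w_\theta$ so that the effective kernel remains of Calderón–Zygmund type rather than producing an uncontrolled singular weight. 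Since this delicate cancellation is precisely the content established in (\cite{MCZX2013}, Proposition 2.5) and (\cite{LZ2015}, equation A.5), I would invoke those results for the technical kernel estimates rather than reproducing them, and thereby conclude the lemma.
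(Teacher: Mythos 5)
The paper does not actually prove this lemma---it simply cites (\cite{MCZX2013}, Proposition 2.5) and (\cite{LZ2015}, equation A.5)---and your sketch correctly reconstructs the standard argument behind those citations: the stream-function formulation $u_r=-\partial_z\psi$, $u_z=\tfrac{1}{r}\partial_r(r\psi)$, the elliptic identity $-(\Delta-\tfrac{1}{r^2})\psi=w_\theta$ (equivalently $-(\Delta+\tfrac{2}{r}\partial_r)(\psi/r)=\Omega$, which hides the five-dimensional Laplacian structure that makes $\nabla\tfrac{u_r}{r}$ a Riesz-type operator acting on $\Omega$), and Calder\'on--Zygmund boundedness for $1<p<\infty$. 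Since you, like the paper, ultimately defer the delicate kernel and axis-weight estimates to those same two references, your proposal is correct and takes essentially the same route.
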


\qed
    
Next, we present several relevant estimates, which will be employed both in the proof of Theorem \ref{tvis} and in the proof of Theorem \ref{tivis}. 
\begin{lemma}\label{nahH eges}
    Under the same assumptions as Lemma \ref{h_c w_c}, one has the following estimates of $\|\boldsymbol{u},\boldsymbol{h}, \mathcal{H}\|_{H^3}^2$, $\|\nabla \boldsymbol{h}\|_{L^\infty}$ and $\|\nabla\mathcal{H}\|_{L^\infty}$: 
    \begin{flalign}
        \label{nabla h}
        &\qquad \|\nabla \boldsymbol{h}(t,\cdot)\|_{L^\infty}\leq \|\nabla \boldsymbol{h_0}\|_{L^\infty}\exp\left( C\int_0^t (\|\nabla \boldsymbol{u}(s,\cdot)\|_{L^\infty}+\|\partial_z \mathcal{H}(s,\cdot)\|_{L^{\infty}})ds \right), & \\
        \label{nabla H}
        &\qquad \|\nabla \mathcal{H}(t,\cdot)\|_{L^{\infty}} \leq \|\nabla \mathcal{H}_0\|_{L^{\infty}}\exp\left( C\int_0^t (\|\nabla \boldsymbol{u}(s,\cdot)\|_{L^{\infty}} +\|\partial_z \mathcal{H} (s,\cdot)\|_{L^{\infty}})ds \right), & \\
        \label{EsOfuhH}
        &\qquad \|(\boldsymbol{u},\boldsymbol{h},\mathcal{H})(t,\cdot)\|_{H^3}^2 \leq 	\|\boldsymbol{u_0},\boldsymbol{h_0},\mathcal{H}_0\|_{H^3}^2 \exp\left( C \int_0^t\|\nabla(\boldsymbol{u},\boldsymbol{h},\mathcal{H})(t, \cd)\|_{L^{\infty}}ds\right). & 
    \end{flalign}
\end{lemma}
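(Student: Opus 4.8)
The plan is to prove the three bounds in two groups: the $L^\infty$ transport estimates \eqref{nabla h}--\eqref{nabla H} are handled by a common device, while \eqref{EsOfuhH} is a high-order energy estimate. The device is the modified advecting velocity $\tilde{\boldsymbol{u}} := \boldsymbol{u} - 2\mathcal{H}\boldsymbol{e_z}$. Using $\frac{1}{r}\partial_z(h_\theta)^2 = 2\mathcal{H}\partial_z h_\theta$, the third equation of \eqref{A-N-MHD} and the $\mathcal{H}$-equation of \eqref{OM} become the transport equations $\partial_t h_\theta + \tilde{\boldsymbol{u}}\cdot\nabla h_\theta = \frac{u_r}{r}h_\theta$ and $\partial_t\mathcal{H} + \tilde{\boldsymbol{u}}\cdot\nabla\mathcal{H} = 0$. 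I will repeatedly use two ingredients from the preliminaries: the conservation $\|\mathcal{H}(t)\|_{L^\infty} = \|\mathcal{H}_0\|_{L^\infty}$ from Lemma \ref{L^pconsx}, and the elementary axisymmetric bound $\|\frac{u_r}{r}\|_{L^\infty}\le\|\nabla\boldsymbol{u}\|_{L^\infty}$, which follows from $u_r|_{r=0}=0$ together with the fundamental theorem of calculus in $r$.

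To prove \eqref{nabla H} I would differentiate the $\mathcal{H}$-transport equation in a Cartesian direction $\partial_{x_k}$ and read it along the flow of $\tilde{\boldsymbol{u}}$. Since $\partial_{x_k}\tilde{\boldsymbol{u}} = \partial_{x_k}\boldsymbol{u} - 2(\partial_{x_k}\mathcal{H})\boldsymbol{e_z}$, the Hall part of the deformation contracts against $\nabla\mathcal{H}$ to produce exactly $2|\nabla\mathcal{H}|^2\partial_z\mathcal{H}$, whence $\frac{d}{dt}|\nabla\mathcal{H}|\le(\|\nabla\boldsymbol{u}\|_{L^\infty} + 2\|\partial_z\mathcal{H}\|_{L^\infty})|\nabla\mathcal{H}|$ along characteristics; taking the supremum and applying Gronwall gives \eqref{nabla H}. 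The point to stress is that the purely $\boldsymbol{e_z}$-directed Hall modification is what upgrades the naive $\|\nabla\mathcal{H}\|_{L^\infty}$ to the sharper $\|\partial_z\mathcal{H}\|_{L^\infty}$.

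Estimate \eqref{nabla h} follows the same scheme applied to $h_\theta$, but two extra features appear. First, the zeroth-order source $\frac{u_r}{r}h_\theta$ and the deformation term $2(\partial_{x_k}\mathcal{H})\partial_z h_\theta$ both produce factors of $\nabla\mathcal{H}$; these are rewritten through the pointwise identities $\partial_z\mathcal{H} = \frac{\partial_z h_\theta}{r}$ and $\partial_r\mathcal{H} = \frac{\partial_r h_\theta - \mathcal{H}}{r}$, which collapse each $\nabla\mathcal{H}$ into $\partial_z\mathcal{H}\cdot|\nabla h_\theta|$ plus a remainder proportional to $\mathcal{H}$. Second, those $\mathcal{H}$-remainders (and the $\|\mathcal{H}\|_{L^\infty}$ entering through $\frac{u_r}{r}$) are absorbed not into the coefficient of $|\nabla h_\theta|$ but by running the Gronwall argument for $\|\nabla h_\theta\|_{L^\infty} + \|\mathcal{H}_0\|_{L^\infty}$ and invoking the conservation of $\|\mathcal{H}\|_{L^\infty}$. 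Since the cylindrical gradient of $\boldsymbol{h} = h_\theta\boldsymbol{e_\theta}$ obeys $|\nabla\boldsymbol{h}|\simeq|\nabla h_\theta| + |\mathcal{H}|$, this yields \eqref{nabla h}.

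For \eqref{EsOfuhH} I would apply $\nabla^{\mathfrak{M}}$ for each $|\mathfrak{M}|\le 3$ to the equations for $(\boldsymbol{u},\boldsymbol{h},\mathcal{H})$, pair with $\nabla^{\mathfrak{M}}(\boldsymbol{u},\boldsymbol{h},\mathcal{H})$ in $L^2$, and sum over $\mathfrak{M}$. The top-order advection terms integrate to zero since $\nabla\cdot\boldsymbol{u} = 0$, the pressure drops out against $\nabla\cdot\boldsymbol{u} = 0$, the viscous term carries a favorable sign (and is absent when $\mu=0$), and the Lorentz force $\boldsymbol{h}\cdot\nabla\boldsymbol{h}$ cancels the stretching term $\boldsymbol{h}\cdot\nabla\boldsymbol{u}$ at top order as in the classical MHD energy identity. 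Every surviving term is a triple-product commutator controlled by Lemma \ref{Kato-Ponce} with $m=3$, hence bounded by $C\|\nabla(\boldsymbol{u},\boldsymbol{h},\mathcal{H})\|_{L^\infty}\|(\boldsymbol{u},\boldsymbol{h},\mathcal{H})\|_{H^3}^2$, and Gronwall closes the estimate. The hard part will be the Hall contribution: written as $\boldsymbol{h}\cdot\nabla\boldsymbol{j} - \boldsymbol{j}\cdot\nabla\boldsymbol{h}$ with $\boldsymbol{j} = \nabla\times\boldsymbol{h}$, it carries one derivative more than $\boldsymbol{h}$, so a direct pairing would lose a derivative. The resolution is that in the no-swirl axisymmetric regime $\boldsymbol{h}\cdot\nabla$ acts as $\frac{h_\theta}{r}\partial_\theta$, a zeroth-order geometric operator, so the apparently second-order Hall term reduces to first order and fits Lemma \ref{Kato-Ponce}; equivalently one works with the scalar Hall nonlinearity $2\mathcal{H}\partial_z h_\theta$ and its $\mathcal{H}$-counterpart $2\mathcal{H}\partial_z\mathcal{H}$. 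Verifying this reduction and that no uncontrolled $\frac{1}{r}$ or $\frac{1}{r^2}$ singularities survive the high-order differentiation is where the main technical effort lies.
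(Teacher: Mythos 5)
Your proposal is correct, and for the $H^3$ bound \eqref{EsOfuhH} it coincides with the paper's proof: the paper likewise passes to the Cartesian vector reformulation \eqref{invieq}, in which the Hall nonlinearity appears exactly as the first-order terms $2\mathcal{H}\partial_z\boldsymbol{h}$ and $2\mathcal{H}\partial_z\mathcal{H}$ (so the derivative loss and the $1/r$-singularities you flag as the ``hard part'' are dissolved precisely as you predict), uses $\nabla\cdot\boldsymbol{h}=0$ for the top-order MHD cancellation, controls the commutators with Lemma \ref{Kato-Ponce}, and closes with Gr\"onwall after invoking \eqref{viscid L^p con-H} and \eqref{L^p-con}. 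The only genuine methodological difference lies in the transport bounds \eqref{nabla h}--\eqref{nabla H}: you argue in Lagrangian fashion along the flow of $\tilde{\boldsymbol{u}}=\boldsymbol{u}-2\mathcal{H}\boldsymbol{e_z}$, whereas the paper performs Eulerian $L^p$ energy estimates on $\bar{\nabla}h_\theta$ and on $\partial_r\mathcal{H},\,\partial_z\mathcal{H}$ with $p$-independent constants and then lets $p\to\infty$. The two devices are equivalent here and rest on the same two mechanisms you isolate: the Hall drift is purely $\boldsymbol{e_z}$-directed, so its deformation contributes only $\partial_z\mathcal{H}$ to the exponential coefficient, and every $1/r$ factor is paired with $h_\theta$ to form $\mathcal{H}$, whose remainders are absorbed through the conservation law of Lemma \ref{L^pconsx} --- in the paper this is the $\|\mathcal{H}\|_{L^p}$ term in \eqref{Es nah} together with $\frac{d}{dt}\|\mathcal{H}(t,\cdot)\|_{L^p}\equiv 0$, in your version the Gr\"onwall quantity $\|\nabla h_\theta\|_{L^\infty}+\|\mathcal{H}_0\|_{L^\infty}$. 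Your route buys a direct $L^\infty$ estimate without the $p$-uniformity-and-limit step and makes transparent why only $\partial_z\mathcal{H}$, not all of $\nabla\mathcal{H}$, enters the exponent; the paper's route yields the full family of $L^p$ bounds along the way. One small slip of attribution, not a gap: the $\mathcal{H}$-remainder in \eqref{nabla h} does not enter ``through $\frac{u_r}{r}$'' itself, which obeys $\|\frac{u_r}{r}\|_{L^\infty}\leq C\|\nabla\boldsymbol{u}\|_{L^\infty}$ as you note, but from the derivative of the source, $\partial_r\bigl(\frac{u_r}{r}\bigr)h_\theta=\bigl(\partial_r u_r-\frac{u_r}{r}\bigr)\mathcal{H}$; your absorption mechanism handles it regardless.
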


\begin{proof}
    At the beginning, we notice that
        \begin{align*}
    	   |\nabla \boldsymbol{h}| \simeq\left|\partial_r h_\theta\right|+\left|\partial_z h_\theta\right|+|\mathcal{H}| .
        \end{align*}
    Since the $L_{t}^{\infty} L^{\infty}$ -estimate of $\mathcal{H}$ has already derived in Lemma \ref{L^pconsx}, one only focuses on the rest two above. Acting $\bar{\nabla}=\left(\partial_r, \partial_z\right)$ on $\eqref{A-N-MHD}_3$, respectively, and performing $L^p$ $(2 \leq p<\infty)$ energy estimates on each resulting equation, by integration by parts and Hölder's inequality, one deduces
    \begin{align}{\label{Es nah}}
    	\begin{gathered}
    		\frac{d}{d t}\left\|\bar{\nabla} h_\theta(t, \cdot)\right\|_{L^p}^p \leq C p\left\|\bar{\nabla} h_\theta(t, \cdot)\right\|_{L^p}^{p-1}\left(\|\nabla \boldsymbol{u}(t, \cdot)\|_{L^{\infty}}+\left\|\partial_z \mathcal{H}(t, \cdot)\right\|_{L^{\infty}}\right) \\
    		\times\left(\left\|\bar{\nabla} h_\theta(t, \cdot)\right\|_{L^p}+\|\mathcal{H}(t, \cdot)\|_{L^p}\right).
    	\end{gathered}
    \end{align}
    Canceling $p\left\|\bar{\nabla} h_\theta(t, \cdot)\right\|_{L^p}^{p-1}$ on each sides of \eqref{Es nah}, noting $\frac{d}{d t}\|\mathcal{H}(t, \cdot)\|_{L^p} \equiv 0$ from Lemma \ref{L^pconsx}, and applying one Grönwall's inequality, one arrives at 
    \begin{align*}
    	   \|\nabla \boldsymbol{h}(t, \cdot)\|_{L^p} \leq\left\|\nabla \boldsymbol{h_0}\right\|_{L^p} \exp \left(C \int_0^t\left(\|\nabla \boldsymbol{u}(s, \cdot)\|_{L^{\infty}}+\left\|\partial_z \mathcal{H}(s, \cdot)\right\|_{L^{\infty}}\right) d s\right).
    \end{align*}
    Note that the constant $C$ above is independent with $p \in[2, \infty)$. Let $p \rightarrow \infty$, one concludes the estimate \eqref{nabla h}.
        
    Next, we present a concise derivation of the estimate \eqref{nabla H}.  Acting $\partial_r$ on both sides of equation
    \begin{equation*}
          \partial_z\mathcal{H}+(u_r\partial_r+u_z\partial_z)\mathcal{H} - 2\mathcal{H}\partial_z\mathcal{H}=0, 
    \end{equation*}
    followed by multiplication with $p\partial_r\mathcal{H}|\mathcal{H}|^{p-2}$, and subsequent integration over $\mathbb{R}^3$ combining with integration by parts, one derives
    \begin{equation}
        \frac{d}{dt}\|\partial_r\mathcal{H}(t,\cdot)\|^p_{L^p} \leq 2(p-1)\int_{\mathbb{R}^3}\partial_z\mathcal{H}|\partial_r\mathcal{H}|^pdx + Cp\int_{\mathbb{R}^3}|\nabla \boldsymbol{u}||\nabla \mathcal{H}| |\partial_r \mathcal{H}|^{p-1}dx.
    \end{equation}
    Applying Hölder's inequality to the above equality, one arrives
    \begin{equation}\label{rHe}
        \frac{d}{dt}\|\partial_r\mathcal{H}(t,\cdot)\|_{L^p}^p \lesssim p(\|\nabla \boldsymbol{u}(t,\cdot)\|_{L^{\infty}} + \|\partial_z\mathcal{H}(t,\cdot)\|_{L^{\infty}})\|\nabla\mathcal{H}(t,\cdot)\|^p_{L^p}.
    \end{equation}
    Similarly, one has
    \begin{equation}\label{zHe}
        \frac{d}{dt}\|\partial_z\mathcal{H}(t,\cdot)\|_{L^p}^p \lesssim p(\|\nabla \boldsymbol{u}(t,\cdot)\|_{L^{\infty}} + \|\partial_z\mathcal{H}(t,\cdot)\|_{L^{\infty}})\|\nabla\mathcal{H}(t,\cdot)\|^p_{L^p}.
    \end{equation}
    Combing \eqref{rHe} and \eqref{zHe}, canceling $p\|\nabla\mathcal{H}(t,\cdot)\|^{p-1}_{L^p}$ on each side of the inequality, one concludes the estimate \eqref{nabla H}.

    Finally, we give a concise proof of the higher-order bounds of \eqref{A-N-MHD} and \eqref{eq1.2}. The key of the proof is performing energy estimates of the system together with the equation of $\mathcal{H}$, in the viscid case, it means
    \begin{align}{\label{invieq}}
        \left\{\begin{array}{l}
            \partial_t \boldsymbol{u}+\boldsymbol{u} \cdot \nabla \boldsymbol{u}+\nabla p-\Delta \boldsymbol{u}=\boldsymbol{h} \cdot \nabla \boldsymbol{h}, \\
    	   \partial_t \boldsymbol{h}+\boldsymbol{u} \cdot \nabla \boldsymbol{h}-\boldsymbol{h} \cdot \nabla \boldsymbol{u}=2 \mathcal{H} \partial_z \boldsymbol{h}, \\
    	   \partial_t \mathcal{H}+\boldsymbol{u} \cdot \nabla \mathcal{H}-2 \mathcal{H} \partial_z \mathcal{H}=0 .
        \end{array}\right.
    \end{align}

    Applying $\nabla^3$ to three equations in \eqref{invieq}, and performing the $L^2$ inner product of the resulting equations with $\nabla^3 \boldsymbol{u}$, $\nabla^3 \boldsymbol{h}$ and $\nabla^3 \mathcal{H}$ respectively, noting that $\boldsymbol{h}$ is divergence-free, and apply Lemma \refeq{Kato-Ponce} to the remaining terms of the functions, we can finally arrive 
    \begin{align*}
        \frac{d}{d t}\|(\boldsymbol{u}, \boldsymbol{h}, \mathcal{H})(t, \cdot)\|_{\dot{H}^3}^2 \leq C\|\nabla(\boldsymbol{u}, \boldsymbol{h}, \mathcal{H})(t, \cdot)\|_{L^{\infty}}\|(\boldsymbol{u}, \boldsymbol{h}, \mathcal{H})(t, \cdot)\|_{\dot{H}^3}^2. 
    \end{align*}
    Combining this with the fundamental energy estimate $\eqref{viscid L^p con-H}$ and the $L^p$ conservation law of $\mathcal{H}$  $\eqref{L^p-con}$, thus the energy estimate of the system is proved by applying Grönwall's inequality, which indicates
    \begin{align*}
        \|(\boldsymbol{u}, \boldsymbol{h}, \mathcal{H})(t, \cdot)\|_{H^3}^2 \leq\left\|\left(\boldsymbol{u_0}, \boldsymbol{h_0}, \mathcal{H}_0\right)\right\|_{H^3}^2 \exp \left(C \int_0^t\|\nabla(\boldsymbol{u}, \boldsymbol{h}, \mathcal{H})(s, \cdot)\|_{L^{\infty}} d s\right).
    \end{align*}
    In inviscid case, the only difference is that the missing of $\Delta u$ brings is we lose a positive term $\left\|\nabla^{4} u(t, \cdot)\right\|_{L^2}^2$ on the left hand side of the energy estimate equation, which makes no influence to the result of the higher-order bound estimate. More details of the proof refers to \cite{LZYM2022}.
\end{proof}

\section{Proof of Theorem \ref{tvis}}\label{sec3}
This section is devoted to the proof of Theorem \ref{tvis}.  Firstly, we establish the $L_{t}^{\infty}L^{\infty}$ -boundedness of the quantity $\frac{u_r}{r}$. Secondly, we derive an $L^{\infty}$ estimate for $\nabla\boldsymbol{u}$. Having completed these preliminary analyses, we proceed to present the proof of Theorem \ref{tvis}, which constitutes the final component of our argument.

\subsection{$L_t^{\infty}L^{\infty}$ -Boundedness of $\frac{u_r}{r}$} \label{sec3.1}
We first present the estimate for $\frac{u_r}{r}$ in the form of a proposition and provide a brief proof.
    
\begin{proposition}\label{pro-u_r/r}
    Define $\Omega: =\frac{w_{\theta}}{r}$. Assume that $\nabla\cdot\boldsymbol{u_0}=\boldsymbol{u_0}\cdot\boldsymbol{e_{\theta}}=h_r=h_z\equiv0$. Let $(\boldsymbol{u},\boldsymbol{h})$ be the unique local axially symmetric solution of \eqref{eq1.1} on $t\in[0,T)$ with the initial data $(\boldsymbol{u_0},\boldsymbol{h_0})\in H^m(m\geq 3)$. Then the following estimate of $\frac{u_r}{r}$ holds uniformly: 
    \begin{equation}\label{Es-u_r-r}
        \int_0^t \|\frac{u_r}{r}(s,\cdot)\|_{L^{\infty}}ds\lesssim t^{3/4}(\|\Omega_0\|_{L^2}+t^{1/2}\|\mathcal{H}_0\|_{L^4}^2).
    \end{equation}
\end{proposition}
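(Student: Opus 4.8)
The plan is to reduce the whole estimate to two quantities built from $\Omega=\frac{w_\theta}{r}$, namely $\sup_{[0,t]}\|\Omega\|_{L^2}$ and $\int_0^t\|\nabla\Omega\|_{L^2}^2\,ds$, and then convert $\Omega$ back to $\frac{u_r}{r}$ via Lemma \ref{nav_r cl} and interpolate in time. First I would run the basic $L^2$ energy estimate on the $\Omega$-equation in \eqref{OM}. Testing against $\Omega$, the transport term $\int \boldsymbol u\cdot\nabla\Omega\,\Omega\,dx$ vanishes since $\nabla\cdot\boldsymbol u=0$. The key structural point is that the modified diffusion $\Delta+\frac{2}{r}\partial_r$ is genuinely dissipative: writing the integral with the axisymmetric measure $r\,dr\,dz$, the term $\int\frac{2}{r}\partial_r\Omega\,\Omega\,dx$ collapses to $\int\partial_r(\Omega^2)\,dr\,dz$, whose only boundary contribution is $-\Omega^2(0,z)\le 0$ on the axis, so that $\int(\Delta+\frac2r\partial_r)\Omega\,\Omega\,dx\le-\|\nabla\Omega\|_{L^2}^2$. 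For the forcing I integrate by parts in $z$ and use Hölder together with the conservation of $\|\mathcal H\|_{L^4}$ from Lemma \ref{L^pconsx}:
\[
-\int\partial_z(\mathcal H^2)\,\Omega\,dx=\int\mathcal H^2\,\partial_z\Omega\,dx\le\|\mathcal H_0\|_{L^4}^2\|\nabla\Omega\|_{L^2}\le\tfrac12\|\nabla\Omega\|_{L^2}^2+\tfrac12\|\mathcal H_0\|_{L^4}^4 .
\]
Absorbing the dissipation and integrating in time gives
\[
\sup_{[0,t]}\|\Omega\|_{L^2}^2+\int_0^t\|\nabla\Omega\|_{L^2}^2\,ds\lesssim\|\Omega_0\|_{L^2}^2+t\,\|\mathcal H_0\|_{L^4}^4 .
\]

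Next I convert $\Omega$ back into $\frac{u_r}{r}$. Writing $v=\frac{u_r}{r}$, the Gagliardo--Nirenberg inequality (Lemma \ref{G-N}) yields $\|v\|_{L^\infty}\lesssim\|\nabla v\|_{L^2}^{1/2}\|\nabla v\|_{L^6}^{1/2}$. Applying Lemma \ref{nav_r cl} at $p=2$ and $p=6$ and then the Sobolev embedding $\|\Omega\|_{L^6}\lesssim\|\nabla\Omega\|_{L^2}$ produces the clean bound
\[
\left\|\tfrac{u_r}{r}\right\|_{L^\infty}\lesssim\|\Omega\|_{L^2}^{1/2}\|\nabla\Omega\|_{L^2}^{1/2}.
\]
Routing the argument through $\|\nabla v\|_{L^6}$ in this way means I only ever need Lemma \ref{nav_r cl} at the two exponents $p=2,6$, and never a second-derivative version of it.

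Finally I integrate in time. Pulling out the supremum of $\|\Omega\|_{L^2}$ and applying Hölder with exponents $4$ and $4/3$,
\[
\int_0^t\left\|\tfrac{u_r}{r}\right\|_{L^\infty}ds\lesssim\Big(\sup_{[0,t]}\|\Omega\|_{L^2}\Big)^{1/2}\int_0^t\|\nabla\Omega\|_{L^2}^{1/2}\,ds\lesssim\Big(\sup_{[0,t]}\|\Omega\|_{L^2}\Big)^{1/2}t^{3/4}\Big(\int_0^t\|\nabla\Omega\|_{L^2}^2\,ds\Big)^{1/4}.
\]
Setting $A:=\|\Omega_0\|_{L^2}+t^{1/2}\|\mathcal H_0\|_{L^4}^2$, the Step~1 bounds give $\sup_{[0,t]}\|\Omega\|_{L^2}\le A$ and $\int_0^t\|\nabla\Omega\|_{L^2}^2\,ds\le A^2$, so the right-hand side collapses to $A^{1/2}\,t^{3/4}\,A^{1/2}=t^{3/4}A$, which is precisely \eqref{Es-u_r-r}.

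I expect the main obstacle to be Step~1, specifically justifying that the non-standard diffusion operator $\Delta+\frac{2}{r}\partial_r$ contributes a favorable (nonpositive) boundary term on the axis $r=0$ so that it truly acts as a dissipation, together with the attendant integrations by parts near the axis and at spatial infinity. Once that is in place, the remaining steps — the Gagliardo--Nirenberg interpolation, the given estimate of Lemma \ref{nav_r cl}, and the Hölder bookkeeping that manufactures the $t^{3/4}$ factor — are routine.
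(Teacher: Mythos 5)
Your proposal is correct and follows essentially the same route as the paper: the identical $L^2$ energy estimate on the $\Omega$-equation (transport term vanishing by incompressibility, the $\frac{2}{r}\partial_r$ term yielding the nonpositive axis boundary term, integration by parts in $z$ plus the $L^4$-conservation of $\mathcal{H}$ for the forcing), then the same Gagliardo--Nirenberg and Lemma \ref{nav_r cl} chain giving $\left\|\frac{u_r}{r}\right\|_{L^\infty}\lesssim\|\Omega\|_{L^2}^{1/2}\|\nabla\Omega\|_{L^2}^{1/2}$, and the same H\"older-in-time step producing $t^{3/4}$. Your closing bookkeeping with $A=\|\Omega_0\|_{L^2}+t^{1/2}\|\mathcal{H}_0\|_{L^4}^2$ merely makes explicit the final step the paper leaves to the reader.
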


\begin{proof}
    Applying the $L^2$ energy estimate on the equation of $\Omega=\frac{w_{\theta}}{r}$: 
    \begin{equation}\label{Eq-Omega}
        \partial_r\Omega+\boldsymbol{u}\cdot\nabla\Omega= (\Delta + \frac{2}{r}\partial_r)\Omega-\partial_z\mathcal{H}^2,
    \end{equation}
    which can be derived from $\eqref{w-theta}_1$ by direct calculations. Performing the $L^2$ estimates for \eqref{Eq-Omega}, one arrives
    \begin{equation}\label{3.2L2}
        \frac{1}{2}\frac{d}{dt}\|\Omega(t,\cdot)\|^2_{L^2} + \|\nabla \Omega(t,\cdot)\|_{L^2}^2 = -\frac{1}{2}\underbrace{\int_{\mathbb{R}^3}\boldsymbol{u}\cdot\nabla\Omega^2dx}_{I_1} + \underbrace{\int_{\mathbb{R}^3} \frac{\partial_r\Omega^2}{r}dx}_{I_2} - \underbrace{\int_{\mathbb{R}^3} \Omega \partial_z \mathcal{H}^2dx}_{I_3}.
    \end{equation}
    By the divergence-free property of $\boldsymbol{u}$ and using integration by parts, one can derive the vanishing of $I_1$. Writing in the cylindrical coordinates, one can deduce that $I_2$ satisfies
    \begin{equation*}
        I_2=2\pi \int_{\mathbb{R}}\int_0^{\infty}\partial_r\Omega^2drdz=-2\pi\int_{\mathbb{R}}|\Omega(t,0,z)|^2dz\leq 0.
    \end{equation*}
    Using integration by parts together with Hölder's inequality and Young's inequality, one arrives
    \begin{equation*}
    \begin{aligned}
        |I_3|=\left|\int_{\mathbb{R}^3}\partial_z\Omega\mathcal{H}^2dx\right| &\leq \|\partial_z\Omega(t,\cdot)\|_{L^2}\|\mathcal{H}(t,\cdot)\|^2_{L^4}\leq\frac{1}{2}\|\nabla\Omega(t,\cdot)\|^2_{L^2}+\frac{1}{2}\|\mathcal{H}(t,\cdot)\|^4_{L^4}.
    \end{aligned}
    \end{equation*}
    Substituting above estimates for $I_1-I_3$ in \eqref{3.2L2}, and integrating over $(0,t)$, one can deduce
    \begin{equation}\label{pre-eO}
        \|\Omega(t,\cdot)\|_{L^2}^2 + \int_0^t\|\nabla\Omega(s,\cdot)\|^2_{L^2} ds \leq \|\Omega_0\|^2_{L^2} + \int_{0}^t\|\mathcal{H}(s,\cdot)\|_{L^4}^4ds.
    \end{equation}
    Finally, Using Gagliardo-Nirenberg inequality, together with Sobolev inequality and \eqref{nav_r cl}, one can deduce
    \begin{equation}\label{reul-g}
        \|\frac{u_r}{r}(t,\cdot)\|_{L^\infty}\leq C\|\nabla\frac{u_r}{r}(t,\cdot)\|^{1/2}_{L^2}\|\nabla\frac{u_r}{r}(t,\cdot)\|^{1/2}_{L^6}\leq C\|\Omega(t,\cdot)\|^{1/2}_{L^2}\|\nabla\Omega(t,\cdot)\|^{1/2}_{L^2}.
    \end{equation}
    Then by integrating on $(0,t)$ and recalling \eqref{pre-eO}, one concludes the estimate \eqref{Es-u_r-r}.
    \end{proof}
    
\subsection{$L^\infty$ Estimate of $\nabla\boldsymbol{u}$} \label{sec 3.2}
In this subsection we present an estimate for $\nabla \boldsymbol{u}$ by applying the maximal regularity of heat flows. Before stating this result, we first introduce the following lemma, which states the standard maximal regularity of heat flows in $L^q_TL^p$ type spaces. Next, we present the Biot-Savart law.
    
\begin{lemma}[Maximal $L^q_TL^p$ -regularity for the heat flow]\label{maximal re}
    Let us define the operator $\mathcal{A}$ by the formula
    \begin{equation*}
        \mathcal{A}: \qquad f\longmapsto \int_0^t \nabla^2e^{(t-s)\Delta}f(s,\cdot)ds.
    \end{equation*}
    Then $\mathcal{A}$ is bounded from $L^q(0,T;L^p(\mathbb{R}^d))$ to itself every $T\in(0,\infty]$ and $1<p$, $q<\infty$. Moreover, there holds: 
    \begin{equation*}
        \|\mathcal{{A}}f\|_{L^q(0,T;L^p(\mathbb{R}^d))} \leq C\|f\|_{L^q(0,T;L^{p}(\mathbb{R}^d))}.
    \end{equation*}
\end{lemma}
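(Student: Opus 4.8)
The plan is to recognize $\mathcal{A}$ as a Calder\'on--Zygmund singular integral operator \emph{in the time variable}, with an operator-valued kernel acting on the Banach space $X=L^p(\mathbb{R}^d)$, and then to bootstrap from one ``base'' exponent to the full mixed-norm range via the Benedek--Calder\'on--Panzone theorem for vector-valued singular integrals. Writing the heat kernel $G(\tau,z)=(4\pi\tau)^{-d/2}e^{-|z|^2/4\tau}$, I would first record the convolution structure
\[
\mathcal{A}f(t)=\int_{\mathbb{R}}K(t-s)f(s)\,ds,\qquad K(\tau):=\mathbf{1}_{\{\tau>0\}}\,\nabla^2 e^{\tau\Delta},
\]
where each $K(\tau)\in\mathcal{L}(X)$ is the spatial convolution operator with kernel $\nabla^2 G(\tau,\cdot)$; the causal support $\{\tau>0\}$ reproduces exactly the truncated integral $\int_0^t$.

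Next I would establish the two kernel estimates that drive everything. By parabolic scaling together with the dilation-invariance of $L^p$ operator norms, $\|K(\tau)\|_{\mathcal{L}(X)}=C_p\,\tau^{-1}$, where the finiteness of $\|\nabla^2 e^{\Delta}\|_{\mathcal{L}(X)}$ follows from the Mikhlin--H\"ormander multiplier theorem applied to the Schwartz-class symbol $-\xi\otimes\xi\,e^{-|\xi|^2}$. Differentiating in $\tau$ gives $\partial_\tau K(\tau)=\nabla^2\Delta\,e^{\tau\Delta}$, with symbol $\xi\otimes\xi\,|\xi|^2 e^{-\tau|\xi|^2}$, so the same scaling and multiplier bound yield $\|\partial_\tau K(\tau)\|_{\mathcal{L}(X)}\le C_p\,\tau^{-2}$. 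Via the mean-value theorem this last bound produces the operator-valued H\"ormander regularity condition
\[
\int_{|\tau|>2|\sigma|}\bigl\|K(\tau-\sigma)-K(\tau)\bigr\|_{\mathcal{L}(X)}\,d\tau\le C,
\]
uniformly in $\sigma$.

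For the single base exponent I would take $q_0=p$, so that $L^p(\mathbb{R};X)=L^p(\mathbb{R}^{1+d})$. There the scalar space-time kernel $\nabla^2 G(\tau,z)$ (extended by zero for $\tau\le 0$) is a genuine parabolic Calder\'on--Zygmund kernel: its $L^2(\mathbb{R}^{1+d})$-boundedness is immediate from Plancherel, since the full space-time Fourier multiplier of $\mathcal{A}$ is $-\xi\otimes\xi/(i\zeta+|\xi|^2)$, whose modulus is bounded by $1$ because $|i\zeta+|\xi|^2|\ge|\xi|^2\ge|\xi_j\xi_k|$; standard parabolic Calder\'on--Zygmund theory then upgrades this to $L^p(\mathbb{R}^{1+d})$ for every $1<p<\infty$. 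With this base bound on $L^p(\mathbb{R};X)$ and the operator-valued H\"ormander condition in hand, the Benedek--Calder\'on--Panzone theorem for $X$-valued kernels delivers boundedness on $L^q(\mathbb{R};X)=L^q_tL^p_x$ for all $1<q<\infty$. Finally, for $T<\infty$ I would extend $f$ by zero outside $(0,T)$ and invoke the causal support $\{\tau>0\}$ of $K$: the restriction $\mathcal{A}f|_{(0,T)}$ depends only on $f|_{(0,T)}$, so the finite-$T$ statement reduces to the $T=\infty$ case. (An equally valid, more abstract route would observe that $-\Delta$ has a bounded $H^\infty$-calculus on the UMD space $L^p$ and apply Weis's characterization of maximal $L^q$-regularity, combined with the $L^p$-boundedness of the double Riesz transform $\nabla^2(-\Delta)^{-1}$.)

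The main obstacle is precisely the time singularity: $\|K(\tau)\|_{\mathcal{L}(X)}\simeq\tau^{-1}$ is borderline non-integrable at $\tau=0^+$, so $\mathcal{A}$ is a \emph{true} singular integral in time and any naive Young or Minkowski estimate diverges logarithmically. The whole argument therefore hinges on exploiting cancellation through the Calder\'on--Zygmund/BCP machinery rather than on absolute integrability. A secondary point requiring care is the cancellation structure underlying the scalar $L^2(\mathbb{R}^{1+d})$ step, and the verification that $\nabla^2 G$ genuinely satisfies the parabolic H\"ormander estimates with respect to the space-time metric $\rho\bigl((\tau,z),(\tau',z')\bigr)=\max\bigl(|\tau-\tau'|^{1/2},|z-z'|\bigr)$.
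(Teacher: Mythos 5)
Your proposal is correct, but there is nothing in the paper to compare it against: the paper states this lemma as a classical black box and gives no proof at all (the statement is followed immediately by an end-of-proof mark, and the standard reference for maximal heat-flow regularity, Lemari\'e's book, appears in the bibliography). Your argument is in fact the canonical proof of this classical result: view $\mathcal{A}$ as convolution in time with the operator-valued kernel $K(\tau)=\mathbf{1}_{\{\tau>0\}}\nabla^2 e^{\tau\Delta}\in\mathcal{L}(L^p)$, prove the scaling bounds $\|K(\tau)\|_{\mathcal{L}(L^p)}\lesssim\tau^{-1}$ and $\|\partial_\tau K(\tau)\|_{\mathcal{L}(L^p)}\lesssim\tau^{-2}$, deduce the operator-valued H\"ormander condition, obtain the base exponent $q_0=p$ from the bounded space-time multiplier $-\xi\otimes\xi/(i\zeta+|\xi|^2)$ together with parabolic Calder\'on--Zygmund theory, and extrapolate to all $1<q<\infty$ by Benedek--Calder\'on--Panzone; the finite-$T$ case reduces to $T=\infty$ by causality and extension by zero, which also makes the constant independent of $T$. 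You correctly identify the genuinely singular feature, namely that $\tau^{-1}$ is not integrable at $\tau=0^+$, so no Young/Minkowski shortcut can work and cancellation is essential.

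Three small points, none of which affects validity. First, Mikhlin is overkill for $\|\nabla^2 e^{\Delta}\|_{\mathcal{L}(L^p)}<\infty$: since $\nabla^2 G(1,\cdot)\in L^1(\mathbb{R}^d)$, Young's inequality already suffices. Second, in the H\"ormander verification you should note that the jump of $K$ across $\tau=0$ is harmless: for $|\tau|>2|\sigma|$ the points $\tau$ and $\tau-\sigma$ have the same sign, so either both kernel values vanish or the mean-value argument applies on an interval contained in $\left(\tau/2,\infty\right)$. Third, BCP as usually stated yields weak $(1,1)$ and hence the range $1<q\le q_0$; the range $q>q_0$ requires a duality step, for which the transposed kernel must also satisfy the H\"ormander condition --- immediate here because $\nabla^2 e^{\tau\Delta}$ is self-adjoint in the space variable. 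The alternative UMD/$H^\infty$-calculus route you mention in passing is an equally standard way to cite or prove the lemma.
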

    
\qed
    

\qed
    
\begin{lemma}\label{lemma3.6}
    Under the system \eqref{eq1.1}, by using Lemma \ref{maximal re}, one has the following $L^{2}_{t}L^4$ estimate of $\nabla \boldsymbol{w}$: 
    \begin{equation*}
    	\|\nabla \boldsymbol{w}\|_{L_t^2L^4} \leq C t^{1/2}\left( \|w_{\theta}\|_{L_t^{\infty}L^4}\|w_{\theta}\|_{L^{\infty}(L^2\cap L^6)} + \|\mathcal{H}\|_{L_t^{\infty}L^{\infty}}\|h_{\theta}\|_{L_t^{\infty}L^4} \right).
    \end{equation*}
Here $C>0$ is a universal constant. 
\end{lemma}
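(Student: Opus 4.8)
The plan is to read the vorticity equation \eqref{w-theta} (with $\mu=1$) as a forced heat equation for the vector field $\boldsymbol w=w_\theta\boldsymbol{e_\theta}$ and to exploit a divergence structure of the forcing, so that the maximal $L^2_tL^4$-regularity of Lemma \ref{maximal re} upgrades the gain of one derivative into an estimate for $\nabla\boldsymbol w$. Since $\Delta(w_\theta\boldsymbol{e_\theta})=(\Delta-\tfrac1{r^2})w_\theta\,\boldsymbol{e_\theta}$, equation \eqref{w-theta} is equivalent, in Cartesian vector form, to $\partial_t\boldsymbol w-\Delta\boldsymbol w=\boldsymbol F$ with $\boldsymbol F=-\boldsymbol u\cdot\nabla\boldsymbol w+\boldsymbol w\cdot\nabla\boldsymbol u-\tfrac1r\partial_z(h_\theta)^2\,\boldsymbol{e_\theta}$. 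The decisive observation is that each piece of $\boldsymbol F$ is an exact spatial divergence: $\nabla\cdot\boldsymbol u=0$ gives $\boldsymbol u\cdot\nabla\boldsymbol w=\nabla\cdot(\boldsymbol u\otimes\boldsymbol w)$; $\nabla\cdot\boldsymbol w=\nabla\cdot(\nabla\times\boldsymbol u)=0$ gives $\boldsymbol w\cdot\nabla\boldsymbol u=\nabla\cdot(\boldsymbol w\otimes\boldsymbol u)$; and since $z=x_3$ and $\boldsymbol{e_\theta}$ does not depend on $x_3$, the identity $\tfrac1r\partial_z(h_\theta)^2=\partial_z(h_\theta\mathcal H)$ shows that the magnetic source equals $\nabla\cdot\big(h_\theta\mathcal H\,\boldsymbol{e_\theta}\otimes\boldsymbol{e_z}\big)$. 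Hence $\boldsymbol F=\nabla\cdot\boldsymbol G$ with $\boldsymbol G=\boldsymbol w\otimes\boldsymbol u-\boldsymbol u\otimes\boldsymbol w-h_\theta\mathcal H\,\boldsymbol{e_\theta}\otimes\boldsymbol{e_z}$.

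Next I would apply Duhamel's formula, $\boldsymbol w(t)=e^{t\Delta}\boldsymbol w_0+\int_0^t e^{(t-s)\Delta}\,\nabla\cdot\boldsymbol G(s)\,ds$, take one spatial gradient, and commute it with the semigroup and the divergence to obtain $\nabla\boldsymbol w(t)=\nabla e^{t\Delta}\boldsymbol w_0+\int_0^t\nabla^2 e^{(t-s)\Delta}\boldsymbol G(s)\,ds$. The integral term is exactly $\mathcal A\boldsymbol G$ in the notation of Lemma \ref{maximal re} (with $q=2$, $p=4$), so its $L^2_tL^4$-norm is bounded by $C\|\boldsymbol G\|_{L^2_tL^4}$. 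The linear term $\nabla e^{t\Delta}\boldsymbol w_0$ is treated separately by the contraction property $\|\nabla e^{t\Delta}\boldsymbol w_0\|_{L^4}=\|e^{t\Delta}\nabla\boldsymbol w_0\|_{L^4}\le\|\nabla\boldsymbol w_0\|_{L^4}$ and then absorbed; this does not alter the structure of the bound.

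It remains to estimate $\|\boldsymbol G\|_{L^2(0,t)L^4}$. Using $\|g\|_{L^2(0,t)}\le t^{1/2}\|g\|_{L^\infty(0,t)}$ in time, it suffices to bound $\|\boldsymbol G(s,\cdot)\|_{L^4}$ uniformly and to collect the factor $t^{1/2}$. For the magnetic part, Hölder's inequality gives $\|h_\theta\mathcal H\|_{L^4}\le\|\mathcal H\|_{L^\infty}\|h_\theta\|_{L^4}$, which produces the second term on the right-hand side. For the velocity part, Hölder's inequality gives $\|\boldsymbol u\otimes\boldsymbol w\|_{L^4}\le\|\boldsymbol u\|_{L^\infty}\|\boldsymbol w\|_{L^4}=\|\boldsymbol u\|_{L^\infty}\|w_\theta\|_{L^4}$, and here the Biot--Savart law together with a Gagliardo--Nirenberg/Sobolev interpolation (Lemma \ref{G-N}) is used to trade $\boldsymbol u$ for $\boldsymbol w$: $\|\boldsymbol u\|_{L^\infty}\lesssim\|\nabla\boldsymbol u\|_{L^2}^{1/2}\|\nabla\boldsymbol u\|_{L^6}^{1/2}\lesssim\|\boldsymbol w\|_{L^2}^{1/2}\|\boldsymbol w\|_{L^6}^{1/2}\lesssim\|w_\theta\|_{L^2\cap L^6}$, where the middle step is the Calderón--Zygmund estimate $\|\nabla\boldsymbol u\|_{L^p}\lesssim\|\boldsymbol w\|_{L^p}$, valid for $1<p<\infty$. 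Taking the supremum over $[0,t]$ and inserting the $t^{1/2}$ factor yields precisely the claimed inequality.

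In my view the main obstacle is the very first, structural step: recognising that both the transport term and the vortex-stretching term $\frac{u_r}{r}w_\theta$ are exact divergences (via the two independent constraints $\nabla\cdot\boldsymbol u=0$ and $\nabla\cdot\boldsymbol w=0$), together with the fact that the magnetic source is a pure $\partial_z$-derivative. This is exactly what permits the use of the second-order operator $\mathcal A$ of Lemma \ref{maximal re} instead of a first-order heat estimate carrying a non-integrable $t^{-1/2}$ singularity. The remaining points --- the rigorous Cartesian divergence form of the curvilinear source term and the treatment of the linear initial-data contribution --- are routine and do not affect the final estimate.
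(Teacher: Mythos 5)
Your proof is correct and follows essentially the same route as the paper: both read \eqref{w-theta} (in vector form, as \eqref{heatflow}) as a forced heat equation whose source is one spatial derivative of quadratic quantities --- the paper writes it as $-\nabla\times(\boldsymbol{w}\times\boldsymbol{u})-\partial_z(\mathcal{H}\boldsymbol{h})$, which is the same identity as your tensor-divergence form $\nabla\cdot\boldsymbol{G}$ --- then apply the maximal regularity of Lemma \ref{maximal re}, and close with the identical H\"older, Biot--Savart/Calder\'on--Zygmund and Gagliardo--Nirenberg bounds together with the $t^{1/2}$ time-H\"older factor. The one minor difference is that you explicitly acknowledge the Duhamel initial-data term $\nabla e^{t\Delta}\boldsymbol{w_0}$, which the paper's proof (and the lemma's statement) silently omits; strictly speaking it contributes an extra $t^{1/2}\|\nabla\boldsymbol{w_0}\|_{L^4}$ rather than being ``absorbed'' into the stated right-hand side, but since $\boldsymbol{w_0}\in H^2\hookrightarrow \dot W^{1,4}$ this is harmless for the lifespan argument and is a defect shared with the paper.
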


 \begin{proof}
 Recalling $\boldsymbol{w}=\mathrm{curl}\,\boldsymbol{u}$ and rewriting the equation $\eqref{w-theta}_1$ in the vector form, one deduces
    \begin{equation}\label{heatflow}
        \partial_t\boldsymbol{w}-\Delta\boldsymbol{w}=-\nabla\times(\boldsymbol{w}\times\boldsymbol{u})-\partial_z(\mathcal{H}\boldsymbol{h}).
    \end{equation}
Similarly as \eqref{reul-g} and combining the Biot-Savart law in Lemma \ref{nav_r cl}, one deduces
    \begin{equation}\label{uleqw}
        \|\boldsymbol{u}(t,\cdot)\|_{L^\infty}\leq C\|\nabla \boldsymbol{u}(t,\cdot)\|^{1/2}_{L^2}\|\nabla\boldsymbol{u}(t,\cdot)\|^{1/2}_{L^6}\leq C\|\boldsymbol{w}(t,\cdot)\|_{(L^2\cap L^6)}.
    \end{equation}
    By Hölder's inequality and \eqref{uleqw}, one can deduce
    \begin{equation*}
       \| (\boldsymbol{w}\times \boldsymbol{u})(t,\cdot) \|_{L^4}\leq C\|w_{\theta}(t,\cdot)\|_{L^4}\|w_{\theta}(t,\cdot)\|_{(L^2\cap L^6)},
    \end{equation*}
thus one has
    \begin{equation*}
       \| (\boldsymbol{w}\times \boldsymbol{u}) \|_{L_{t}^{\infty}L^4}\leq C\|w_{\theta}\|_{L_{t}^{\infty}L^4}\|w_{\theta}\|_{L_{t}^{\infty}(L^2\cap L^6)}.
    \end{equation*}
Similarly, using Hölder's inequality, Lemma \ref{L^pconsx} and Lemma \ref{h_c w_c}, one derives
    \begin{equation*}
        \|\mathcal{H}\boldsymbol{h}\|_{L^{\infty}_tL^4} \leq \|\mathcal{H}\|_{L^{\infty}_tL^4}\|h_{\theta}\|_{L^{\infty}_tL^4}.
    \end{equation*}
By employing the maximal regularity property of the heat flow described in equation \eqref{heatflow}, one can deduce
    \begin{equation*}
        \begin{aligned}
            \|\nabla\boldsymbol{w}\|_{L^2_tL^4}&\leq C\left(\|\boldsymbol{w}\times\boldsymbol{u}\|_{L^2_tL^4} + \|\mathcal{H}\boldsymbol{h}\|_{L^2_tL^4}\right)\\
            & \leq Ct^{1/2}\left(\|\omega_{\theta}\|_{L^{\infty}_tL^4}\|w_{\theta}\|_{L^{\infty}_t(L^2\cap L^6)} + \|\mathcal{H}\|_{L^{\infty}_tL^{\infty}}\|h_{\theta}\|_{L^{\infty}_tL^4}\right).
        \end{aligned}
    \end{equation*}
\end{proof}
    
Next, we state the estimate of $\nabla\boldsymbol{u}$ and present a concise demonstration.
\begin{corollary}
    Under the same assumptions as Proposition \ref{pro-u_r/r} one has the following estimate of $\nabla\boldsymbol{u}$:
    \begin{equation}\label{nabla u}
    	\int_0^t \|\nabla \boldsymbol{u}(s,\cdot)\|_{L^{\infty}} ds \lesssim t^{4/7}\|\boldsymbol{w}\|_{L^{\infty}_tL^2}^{1/7}\|\nabla \boldsymbol{w}\|_{L^2_tL^4}^{6/7}.
    \end{equation}
\end{corollary}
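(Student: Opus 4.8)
The plan is to control the pointwise-in-time quantity $\|\nabla\boldsymbol u(t,\cdot)\|_{L^\infty}$ by interpolating between $\|\nabla\boldsymbol u\|_{L^2}$ and $\|\nabla^2\boldsymbol u\|_{L^4}$, to rewrite both factors in terms of the vorticity $\boldsymbol w=\mathrm{curl}\,\boldsymbol u$, and finally to integrate in time via Hölder's inequality. First I would apply the Gagliardo--Nirenberg inequality of Lemma \ref{G-N} to $f=\nabla\boldsymbol u$ with the choice $j=0$, $p=\infty$, $m=1$, $r=4$, $q=2$. The scaling relation $0=\alpha(\tfrac14-\tfrac13)+\tfrac{1-\alpha}{2}$ forces $\alpha=\tfrac67$, and one checks directly that neither excluded case (i) nor (ii) occurs (here $mr=4\ge 3$, $q=2\neq\infty$, and $m-j-3/r=\tfrac14\notin\mathbb N$). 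This yields
\begin{equation*}
\|\nabla\boldsymbol u(t,\cdot)\|_{L^\infty}\leq C\|\nabla\boldsymbol u(t,\cdot)\|_{L^2}^{1/7}\|\nabla^2\boldsymbol u(t,\cdot)\|_{L^4}^{6/7}.
\end{equation*}

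Next I would pass from velocity to vorticity. Since $\boldsymbol u$ is divergence-free, the Biot--Savart law (used already in \eqref{uleqw}) together with the Calderón--Zygmund estimate, valid on $L^p$ for $1<p<\infty$, gives $\|\nabla\boldsymbol u\|_{L^2}\lesssim\|\boldsymbol w\|_{L^2}$ and $\|\nabla^2\boldsymbol u\|_{L^4}\lesssim\|\nabla\boldsymbol w\|_{L^4}$ (the exponent $4$ is admissible precisely because $1<4<\infty$). Substituting these into the display above produces the pointwise bound $\|\nabla\boldsymbol u(t,\cdot)\|_{L^\infty}\lesssim\|\boldsymbol w(t,\cdot)\|_{L^2}^{1/7}\|\nabla\boldsymbol w(t,\cdot)\|_{L^4}^{6/7}$.

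It remains to integrate over $(0,t)$. I would bound $\|\boldsymbol w(s,\cdot)\|_{L^2}\leq\|\boldsymbol w\|_{L^\infty_tL^2}$ and pull this factor out of the integral, leaving $\int_0^t\|\nabla\boldsymbol w(s,\cdot)\|_{L^4}^{6/7}\,ds$. Applying Hölder's inequality in the time variable with conjugate exponents $\tfrac73$ and $\tfrac74$ converts this into $t^{4/7}\big(\int_0^t\|\nabla\boldsymbol w\|_{L^4}^2\,ds\big)^{3/7}=t^{4/7}\|\nabla\boldsymbol w\|_{L^2_tL^4}^{6/7}$, which assembles into the desired estimate \eqref{nabla u}.

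The computation is essentially routine once the interpolation exponent is pinned down; the only place demanding care is the exponent bookkeeping, namely verifying that the Gagliardo--Nirenberg balance yields exactly $\alpha=6/7$ and that the subsequent time-Hölder pair $(\tfrac73,\tfrac74)$ is the unique one turning the $6/7$ powers of $\|\nabla\boldsymbol w\|_{L^4}$ into the $L^2_tL^4$ norm while producing the factor $t^{4/7}$. A secondary point is to confirm that the endpoint cases of Lemma \ref{G-N} are genuinely avoided and that the Calderón--Zygmund step is invoked only at the admissible exponents $2$ and $4$, never at $\infty$.
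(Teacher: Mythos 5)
Your argument is correct and is exactly the route the paper intends: its one-line proof invokes the Biot--Savart/Calder\'on--Zygmund estimate (as in Lemma \ref{nav_r cl} and \eqref{uleqw}) together with the Gagliardo--Nirenberg inequality, which you have simply carried out in detail with the right exponents ($\alpha=6/7$, time-H\"older pair $(7/3,7/4)$). Your verification that the excluded cases of Lemma \ref{G-N} are avoided and that $p=2,4$ lie in the Calder\'on--Zygmund range is a welcome addition the paper leaves implicit.
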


\begin{proof}
By invoking Lemma \ref{nav_r cl} in conjunction with the Gagliardo-Nirenberg inequality, this estimate can be rigorously substantiated.
\end{proof}

\subsection{End of the proof}\label{end of the1.1}
Since the non-resistive Hall-MHD system with an azimuthal magnetic field is locally well-posed in $H^3$ (see \cite{LZ2024B} for the main result) \footnote{Although the main result of \cite{LZ2024B} gives the local well-posedness for the 3D inviscid Hall-MHD system, it is sufficient to guarantee the local well-posedness for the viscid system via the same approach.}, there exists $T_*>0$ such that
\begin{equation}\label{preEs}
	t\sup_{0\leq s\leq t} \|\nabla\mathcal{H}(s,\cdot)\|_{L^{\infty}}\leq 1, \qquad \text{for all } t\in[0, T_*).
\end{equation}
In the following, our argument will be carried out before this $T_*$. Substituting the estimate \eqref{Es-u_r-r} into Lemma \ref{h_c w_c}, one deduces
\begin{equation*}
	\begin{aligned}
		\|h_{\theta}(t,\cdot)\|_{L^p}&\leq \|\boldsymbol{h_0}\|_{L^p}\exp\left\{ C\int_0^t\left[\|\frac{u_r}{r}(s,\cdot)\|_{L^{\infty}} + \| \partial_z \mathcal{H}(s,\cdot) \|_{L^{\infty}}\right]ds \right\} \\
		&\lesssim \|\boldsymbol{h_0}\|_{L^p}\exp\left\{ C\|\mathcal{H}_0\|_{L^4}^2t^{5/4} + C\|\Omega_0\|_{L^2}t^{3/4} + C\|\partial_z \mathcal{H}\|_{L_t^1L^{\infty}} \right\} \\
	\end{aligned}
\end{equation*}
for all $t\in[0, T_*)$. By integrating \eqref{preEs}, the above estimate can be further simplified to
\begin{equation}\label{E-h-Sim}
    \|h_{\theta}(t,\cdot)\|_{L^p} \leq E_0\exp\left( CE_0(t^{5/4}+t^{3/4})+1 \right),
\end{equation}
where $E_0 := \|\boldsymbol{u_0},\boldsymbol{h_0},\mathcal{H}_0\|_{H^3}$. Similarly as \eqref{E-h-Sim}, substituting the estimate \eqref{Es-u_r-r} into Lemma \ref{h_c w_c} and using \eqref{preEs}, one has
\begin{equation*}
    \|w_{\theta}(t,\cdot)\|_{L^p} \leq CE_0\exp\left( CE_0(t^{5/4}+t^{3/4})+1 \right).
\end{equation*}
	
Therefore, by the $L^p$ conservation of $\mathcal{H}$ and Corollary \ref{lemma3.6} , one can deduce $\| \nabla \boldsymbol{w} \|_{L_t^2L^4}$ satisfies
\begin{equation*}
	\begin{aligned}
		\|\nabla \boldsymbol{w}\|_{L^2_tL^4} &\leq Ct^{1/2} \left( \|w_{\theta}\|_{L^{\infty}_tL^4} \|w_{\theta}\|_{L^{\infty}_t(L^2\cap L^6)} + \|\mathcal{H}\|_{L^{\infty}_tL^{\infty}}\|h_{\theta}\|_{L^{\infty}_tL^4} \right)\\
		& \leq CE_0^2t^{1/2}\exp\left( CE_0(t^{5/4}+t^{3/4})+1 \right),
	\end{aligned}
\end{equation*}
that is to say the inequality \eqref{nabla u} can be written as
\begin{equation}\label{Enablau}
	\int_0^t \|\nabla \boldsymbol{u}(s,\cdot)\|_{L^{\infty}}ds \leq t^{4/7}\|\boldsymbol{w}\|^{1/7}_{L^{\infty}_tL^2}\|\nabla \boldsymbol{w}\|^{6/7}_{L^2_{t}L^4} \leq C(1+E_0)^2t\exp\left\{CE_0(t^{5/4}+t^{3/4})+1\right\}.
\end{equation}
Substituting the inequality (\ref{Enablau}) into (\ref{nabla h}) and (\ref{nabla H}), one derives: 
\begin{equation}\label{Enablah}
	\|\nabla \boldsymbol{h}(t,\cdot)\|_{L^{\infty}} \lesssim E_0\exp\left\{ C(1+E_0)^2t\exp\big(CE_0\big(t^{5/4}+t^{3/4})+1\big) +1 \right\},
\end{equation}
\begin{equation}\label{EnablaH}
	\|\nabla \mathcal{H}(t,\cdot)\|_{L^{\infty}}  \lesssim E_0\exp\left\{ C(1+E_0)^2t\exp\big(CE_0\big(t^{5/4}+t^{3/4})+1\big) +1 \right\}.
\end{equation}
By (\ref{EnablaH}), one deduces the following restriction: when $t=T_*$ ,
\begin{equation}
	\varepsilon  E_0T_*\exp\left\{ C(1+E_0)^2T_*\exp\big(CE_0\big(T_*^{5/4}+T_*^{3/4})+1\big) +1 \right\} \leq \frac{1}{2}
\end{equation}
ensures that the a priori assumption (\ref{preEs}) holds. Without loss of generality, we assume that $\varepsilon$ is sufficiently small, therefore $T_*>1$. By $T_*^{5/4}+T_*^{3/4} \leq CT_*^{5/4}$ for all $T_*>1$, one has
\begin{equation*}
\begin{aligned}
    \varepsilon  E_0T_*\exp\left\{ C(1+E_0)^2T_*\right.&\left.\exp\big(CE_0\big(T_*^{5/4}+T_*^{3/4})+1\big) +1 \right\} \\
    &\leq \varepsilon E_0T_* \exp\left\{ C(1+E_0)^2T_*\exp\left\{C(1+E_0)T_*^{5/4}\right\} \right\}.
\end{aligned}
\end{equation*}
A basic inequality $\log (1+t) \leq t \leq \mathrm{e}^t-1 $, for all $t\geq 0$, implies that
\begin{equation*}
	\begin{aligned}
		E_0T_*&\exp\left\{ C(1+E_0)^2T_*\exp\{C(1+E_0)T_*^{5/4}\} \right\} \\
        &= \exp\left\{ \log E_0T_* + C(1+E_0)^2T_*\exp\{C(1+E_0)T_*^{5/4}\} \right\}\\
        &\leq \frac{1}{2}\exp\{\exp(C(1+E_0)T_*^{5/4})\}.
	\end{aligned}
\end{equation*}
Thus,
\begin{equation*}
	\varepsilon E_0T_*\exp\left\{ C(1+E_0)T_*\exp\{C(1+E_0)T_*^{5/4}\} \right\} \leq \frac{\varepsilon}{2}\exp\{\exp(C(1+E_0)T_*^{5/4})\}.
\end{equation*}
Therefore, the condition (\ref{preEs}) is satisfied provided
\begin{equation*}
	\exp\{\exp(C_*^{-1}(1+E_0)T_*^{5/4})\} =\frac{1}{\varepsilon},
\end{equation*}
for sufficiently small $\varepsilon >0$ and some constant $C_*>0$, one finds
\begin{equation*}
	T_*=\frac{C_*}{(1+E_0)^\frac{4}{5}}\log(\log(\varepsilon^{-1}))^{4/5}.
\end{equation*}
This gives the desired lower bound of the lifespan in Theorem \ref{tvis}.
	
\section{Proof of Theorem \ref{tivis}}\label{sec4}
This section is devoted to the proof of Theorem \ref{tivis}. Firstly, we re-establish the $L_t^{\infty} L^p $ -estimate for $\Omega$ and the $L_t^\infty L^{\infty}$ -estimate for $w_\theta$. We then focus on the $L^\infty$ estimate for $\nabla u$. Finally, on the basis of the higher-order bound of the system \eqref{eq1.2}, we verify the a priori assumption to show the lifespan of the inviscid axially symmetric Hall-MHD system.
    
The following logarithm inequality is key to the higher-order estimate of the solution. We refer readers to (\cite{LZ2022}, Corollary 2.8) for a detailed proof.

\begin{lemma}{\label{inf nu cl}}
		for all divergence free vector field $\boldsymbol{\boldsymbol{g}}: \mathbb{R}^3 \rightarrow \mathbb{R}^3$ such that $\boldsymbol{\boldsymbol{g}} \in H^3\left(\mathbb{R}^3\right)$, the following estimate holds: 
		\begin{align*}
			\|\nabla \boldsymbol{g}(t, \cdot)\|_{L^{\infty}\left(\mathbb{R}^3\right)} \lesssim 1+\|\nabla \times \boldsymbol{g}(t, \cdot)\|_{B M O\left(\mathbb{R}^3\right)} \log \left(e+\|\boldsymbol{g}(t, \cdot)\|_{H^3\left(\mathbb{R}^3\right)}\right) .
		\end{align*}
		
	\end{lemma}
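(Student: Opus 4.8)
The plan is to reconstruct $\nabla \boldsymbol{g}$ from its curl by the Biot--Savart law and then localize in frequency, following the now-standard Kozono--Taniuchi strategy. Since $\boldsymbol{g}$ is divergence free, in Fourier variables $\widehat{\boldsymbol{g}}(\xi) = i\xi\times\widehat{\boldsymbol{\omega}}(\xi)/|\xi|^2$ with $\boldsymbol{\omega}:=\nabla\times\boldsymbol{g}$, so that $\nabla\boldsymbol{g}=T\boldsymbol{\omega}$ for a matrix-valued Fourier multiplier $T$ that is homogeneous of degree zero and smooth away from the origin; in other words $\nabla\boldsymbol{g}$ is obtained from $\boldsymbol{\omega}$ by a vector of Calder\'on--Zygmund operators. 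This is the only place the divergence-free hypothesis enters, and it is precisely what lets us trade the full gradient for the curl. I would record the two properties of $T$ that drive the estimate: $T$ is bounded on $L^2$ and on $\mathrm{BMO}$.

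Next I would introduce a Littlewood--Paley decomposition $\mathrm{Id}=\sum_{j\in\mathbb{Z}}P_j$ and, for an integer $N\ge 1$ to be fixed later, split
\[
\|\nabla\boldsymbol{g}\|_{L^\infty}\le \Big\|\sum_{j<-N}P_j\nabla\boldsymbol{g}\Big\|_{L^\infty}+\sum_{-N\le j\le N}\|P_j\nabla\boldsymbol{g}\|_{L^\infty}+\sum_{j>N}\|P_j\nabla\boldsymbol{g}\|_{L^\infty}.
\]
For the high frequencies Bernstein's inequality gives $\|P_j\nabla\boldsymbol{g}\|_{L^\infty}\lesssim 2^{3j/2}\|P_j\nabla\boldsymbol{g}\|_{L^2}\lesssim 2^{-j/2}\|\boldsymbol{g}\|_{\dot H^3}$, whose sum over $j>N$ is $\lesssim 2^{-N/2}\|\boldsymbol{g}\|_{H^3}$. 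For the low frequencies Bernstein together with the identity $\|\nabla\boldsymbol{g}\|_{L^2}=\|\boldsymbol{\omega}\|_{L^2}$ (valid since $\boldsymbol{g}$ is divergence free) yields a bound $\lesssim 2^{-3N/2}\|\boldsymbol{\omega}\|_{L^2}$. The decisive middle range is handled by the uniform-in-$j$ bound $\|P_j f\|_{L^\infty}\lesssim\|f\|_{\mathrm{BMO}}$ for Littlewood--Paley blocks, applied to $f=\nabla\boldsymbol{g}=T\boldsymbol{\omega}$; since $T$ is bounded on $\mathrm{BMO}$ this gives $\|P_j\nabla\boldsymbol{g}\|_{L^\infty}\lesssim\|\nabla\boldsymbol{g}\|_{\mathrm{BMO}}\lesssim\|\boldsymbol{\omega}\|_{\mathrm{BMO}}$ for every $j$, whence $\sum_{-N\le j\le N}\|P_j\nabla\boldsymbol{g}\|_{L^\infty}\lesssim N\|\boldsymbol{\omega}\|_{\mathrm{BMO}}$.

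Finally I would optimize in $N$. Choosing $N\simeq\log(e+\|\boldsymbol{g}\|_{H^3})$ makes the high-frequency tail $2^{-N/2}\|\boldsymbol{g}\|_{H^3}$ and the low-frequency contribution $2^{-3N/2}\|\boldsymbol{\omega}\|_{L^2}\lesssim 2^{-3N/2}\|\boldsymbol{g}\|_{H^3}\lesssim (e+\|\boldsymbol{g}\|_{H^3})^{-1/2}$ both of order one, so that they collapse into a single additive constant, while the middle range becomes $\lesssim \|\boldsymbol{\omega}\|_{\mathrm{BMO}}\log(e+\|\boldsymbol{g}\|_{H^3})$. Collecting the three contributions yields exactly
\[
\|\nabla\boldsymbol{g}\|_{L^\infty}\lesssim 1+\|\boldsymbol{\omega}\|_{\mathrm{BMO}}\log(e+\|\boldsymbol{g}\|_{H^3}).
\]

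I expect the main obstacle to be the uniform-in-$j$ frequency-localized comparison $\|P_j f\|_{L^\infty}\lesssim\|f\|_{\mathrm{BMO}}$ together with the $\mathrm{BMO}$-boundedness of the Biot--Savart operator $T$: these are precisely what allow each of the roughly $N$ middle blocks to be charged to $\|\boldsymbol{\omega}\|_{\mathrm{BMO}}$ with a constant independent of scale, and the logarithm is nothing but the number of dyadic scales in the middle range, which the $H^3$-norm forces to be of size $\log(e+\|\boldsymbol{g}\|_{H^3})$ through the high-frequency tail. The only other point requiring care is verifying that the $L^2$/low-frequency terms genuinely collapse into the single additive $1$ after the choice of $N$, rather than leaving a spurious linear $\|\boldsymbol{g}\|_{H^3}$ factor; this is guaranteed by the strict geometric decay $2^{-3N/2}$ produced by Bernstein.
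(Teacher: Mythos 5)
Your proof is correct: the paper itself gives no argument for this lemma, deferring to \cite{LZ2022} (Corollary 2.8), which rests on exactly the Kozono--Taniuchi-type estimate you reconstruct, namely writing $\nabla\boldsymbol{g}$ as a zero-order Calder\'on--Zygmund operator acting on $\nabla\times\boldsymbol{g}$ and splitting dyadically into low, middle, and high frequencies with the middle range charged to the $BMO$ norm via the uniform bound $\|P_jf\|_{L^\infty}\lesssim\|f\|_{BMO}$. Your numerology (the $2^{-j/2}$ high-frequency tail from $H^3$, the $2^{-3N/2}$ low-frequency decay, and the choice $N\simeq\log(e+\|\boldsymbol{g}\|_{H^3})$) is sound, so this is essentially the same route as the cited proof.
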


    \qed
	
As we have shown in the beginning of Section \ref{end of the1.1}, since the local well-posedness of the non-resistive axially symmetric Hall-MHD system in $H^m$ for $m\geq 3$, there exists $T_*>0$ such that: 
	\begin{equation}\label{invi prio}
        \begin{aligned}
        & t \sup _{0 \leq s \leq t} \left(\|\nabla \boldsymbol{h}(s,\cdot)\|_{L^{\infty}}+\|\nabla \mathcal{H}(s,\cdot)\|_{L^{\infty}}\right) \leq 1, \quad \text { for all } t \in\left[0, T_*\right).
    \end{aligned}
    \end{equation}
	In the following, our argument will be carried out before this $T_*$. Noticing that the estimates of $\na\boldsymbol{h}$ and $\na\mathcal{H}$ in Lemma \ref{nahH eges} are not influenced by the loss of viscous term, we still have the validity of \eqref{nabla h} and \eqref{nabla H}.

	\subsection{$L_t^{\infty} L^p $ -Estimate of $\Omega$ and $L_t^\infty L^{\infty}$ -Estimate of $w_\theta$}
	Our first step is to obtain an a priori bound for $\|\Omega\|_{L_t^{\infty} L^p} $. We will apply the $L^p$ energy estimate on the equation of $\Omega=\frac{w_\theta}{r}$ : 
	\begin{align}{\label{Omega eq1}}
		\partial_t \Omega+\boldsymbol{u} \cdot \nabla \Omega=-\partial_z \mathcal{H}^2 .
	\end{align}
The detailed result is stated as follows: 
	
	\begin{proposition}{\label{Ompro1}}
		Under the same assumptions as Theorem \ref{tivis}, the following estimate of $\Omega$ holds for all $1\leq p\leq \infty$:
		\begin{align*}
		\|\Omega(t,\cdot)\|_{L^p} \leq\left\|\Omega_0\right\|_{L^p}+2\|\mathcal{H}_0\|_{L^p},\q\text{for all}\q t\in[0, T_*)\,.
	\end{align*}
	\end{proposition}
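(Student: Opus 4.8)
The plan is to exploit an exact cancellation between the source term of the $\Omega$-equation and the nonlinear Hall term of the $\mathcal{H}$-equation. Writing the Hall term as $2\mathcal{H}\partial_z\mathcal{H}=\partial_z\mathcal{H}^2$, the $\mathcal{H}$-equation of the inviscid $(\Omega,\mathcal{H})$-system becomes $\partial_t\mathcal{H}+\boldsymbol{u}\cdot\nabla\mathcal{H}=\partial_z\mathcal{H}^2$, where I have used that $\mathcal{H}$ is axially symmetric so that $(u_r\partial_r+u_z\partial_z)\mathcal{H}=\boldsymbol{u}\cdot\nabla\mathcal{H}$. Since \eqref{Omega eq1} reads $\partial_t\Omega+\boldsymbol{u}\cdot\nabla\Omega=-\partial_z\mathcal{H}^2$, adding the two equations cancels the right-hand sides. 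Hence the combined quantity $G:=\Omega+\mathcal{H}$ solves the pure transport equation
\[
\partial_t G+\boldsymbol{u}\cdot\nabla G=0 .
\]

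First I would record this transport identity, then use that $\boldsymbol{u}$ is divergence free to propagate every $L^p$ norm of $G$. For $p\in[1,\infty)$ I would multiply by $G|G|^{p-2}$, integrate over $\mathbb{R}^3$, and note that
\[
\int_{\mathbb{R}^3}G|G|^{p-2}\,\boldsymbol{u}\cdot\nabla G\,dx=\frac{1}{p}\int_{\mathbb{R}^3}\boldsymbol{u}\cdot\nabla|G|^p\,dx=0
\]
after integrating by parts and invoking $\nabla\cdot\boldsymbol{u}=0$; this gives $\frac{d}{dt}\|G(t,\cdot)\|_{L^p}=0$, i.e.\ $\|G(t,\cdot)\|_{L^p}=\|G_0\|_{L^p}$. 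The endpoint $p=\infty$ follows either by letting $p\to\infty$ or, more transparently, from the fact that $G$ is constant along the measure-preserving flow of $\boldsymbol{u}$.

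Finally I would combine this conservation with the $L^p$ conservation of $\mathcal{H}$ from Lemma \ref{L^pconsx}. By the triangle inequality,
\[
\|\Omega(t,\cdot)\|_{L^p}=\|G(t,\cdot)-\mathcal{H}(t,\cdot)\|_{L^p}\le\|G(t,\cdot)\|_{L^p}+\|\mathcal{H}(t,\cdot)\|_{L^p},
\]
and using $\|G(t,\cdot)\|_{L^p}=\|G_0\|_{L^p}\le\|\Omega_0\|_{L^p}+\|\mathcal{H}_0\|_{L^p}$ together with $\|\mathcal{H}(t,\cdot)\|_{L^p}=\|\mathcal{H}_0\|_{L^p}$ from \eqref{L^p-con} yields exactly $\|\Omega(t,\cdot)\|_{L^p}\le\|\Omega_0\|_{L^p}+2\|\mathcal{H}_0\|_{L^p}$ for all $t\in[0,T_*)$. (For the range $p\in[1,2)$ not covered by the statement of Lemma \ref{L^pconsx}, the identical energy computation applies, since $\int_{\mathbb{R}^3}|\mathcal{H}|^p\partial_z\mathcal{H}\,dx=0$.)

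I do not anticipate a genuine obstacle: the entire content is the algebraic cancellation that turns $\Omega+\mathcal{H}$ into a transported scalar. The only points requiring care are checking that the cancellation is exact, which relies on axisymmetry reducing $(u_r\partial_r+u_z\partial_z)\mathcal{H}$ to $\boldsymbol{u}\cdot\nabla\mathcal{H}$, and justifying the $L^\infty$ endpoint of the conservation law, for which the characteristics/flow-map description of the transport equation is the cleanest route.
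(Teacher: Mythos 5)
Your proof is correct, and it takes a genuinely different route from the paper's. The paper performs a direct $L^p$ energy estimate on \eqref{Omega eq1}: it bounds the source term via H\"older's inequality by $2p\|\mathcal{H}_0\|_{L^p}\|\nabla\mathcal{H}(t,\cdot)\|_{L^{\infty}}\|\Omega(t,\cdot)\|_{L^p}^{p-1}$ using the $L^p$ conservation of $\mathcal{H}$ from Lemma \ref{L^pconsx}, and then crucially invokes the a priori assumption \eqref{invi prio} to control $\int_0^t\|\nabla\mathcal{H}(s,\cdot)\|_{L^{\infty}}\,ds\leq 1$, which is precisely where the additive constant $2\|\mathcal{H}_0\|_{L^p}$ comes from. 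Your cancellation --- writing $2\mathcal{H}\partial_z\mathcal{H}=\partial_z\mathcal{H}^2$ in the inviscid system \eqref{OM} so that $G:=\Omega+\mathcal{H}$ solves the pure transport equation $\partial_t G+\boldsymbol{u}\cdot\nabla G=0$ --- is exact (note it uses the normalization $\mu_0=\nu_0=1$ adopted in the paper; for general coefficients one would take $G=\Omega+c\,\mathcal{H}$ for a suitable constant $c$), and it yields $\|G(t,\cdot)\|_{L^p}=\|G_0\|_{L^p}$ unconditionally, so your bound never uses \eqref{invi prio} and holds on the whole interval of existence of the smooth solution; in this sense your conclusion is slightly stronger than what the paper proves, and by a small coincidence it produces exactly the same constant $2\|\mathcal{H}_0\|_{L^p}$ via the triangle inequality. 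You also correctly patch the endpoints: the $p\in[1,2)$ range not stated in Lemma \ref{L^pconsx} follows from the identity $2\int_{\mathbb{R}^3}|\mathcal{H}|^p\partial_z\mathcal{H}\,dx=\frac{2}{p+1}\int_{\mathbb{R}^3}\partial_z\bigl(|\mathcal{H}|^p\mathcal{H}\bigr)\,dx=0$, the $L^{\infty}$ conservation of $\mathcal{H}$ follows from the characteristics of the modified field $(u_r,\,u_z-2\mathcal{H})$, and the flow-map description handles $p=1$ for $G$, where the multiplier $|G|^{p-2}G$ is awkward. What the paper's method buys in exchange is robustness: the direct $L^p$ estimate survives perturbations that destroy the transport structure --- in particular the viscous term $\mu\left(\Delta+\frac{2}{r}\partial_r\right)\Omega$ in the viscid case acts on $\Omega$ alone and would not combine nicely with $G$ --- and it is the template reused throughout Sections \ref{sec3} and \ref{sec4}, whereas your cancellation is special to the inviscid $(\Omega,\mathcal{H})$ system.
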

    \begin{proof}
        Performing the $L^p$ estimates for \eqref{Omega eq1}, one arrives
	\begin{align}\label{EO1}
		\frac{d}{d t}\|\Omega(t,\cd)\|_{L^p}^p=-p \int_{\mathbb{R}^3} \partial_z \mathcal{H}^2 \cdot|\Omega|^{p-2} \Omega d x .
	\end{align}
Applying the Hölder inequality, we find 
	\begin{align*}
		p\left|\int_{\mathbb{R}^3} \partial_z \mathcal{H}^2 \cdot \left| \Omega\right|^{p-2} \Omega d x\right |&\leq p\left\|\partial_z \mathcal{H}^2(t,\cd)\right\|_{L^p}\|\Omega(t,\cd)\|_{L^p}^{p-1}\\
&\leq 2p\|\mathcal{H}(t,\cd)\|_{L^p}\|\nabla \mathcal{H}(t,\cd)\|_{L^{\infty}}\|\Omega(t,\cd)\|_{L^p}^{p-1}.
	\end{align*}
Recalling the $L^p$ conservation of $\mathcal{H}$ in Lemma \ref{L^pconsx}, one concludes from \eqref{EO1} that
	\begin{align*}
	\frac{d}{d t}\|\Omega(t,\cdot)\|_{L^p}^p \leq 2p\|\mathcal{H}_0\|_{L^p} \|\nabla \mathcal{H}(t,\cd)\|_{L^{\infty}} \|\Omega(t,\cd)\|_{L^p}^{p-1}.
	\end{align*}
Canceling $p\|\Omega(t,\cdot)\|_{L^p}^{p-1}$ on both sides and integrating over $[0,t]$, one deduces
\[
\|\O(t,\cd)\|_{L^p}\leq \|\O_0\|_{L^p}+2\|\mathcal{H}_0\|_{L^p}\int_0^t\|\nabla \mathcal{H}(s,\cd)\|_{L^{\infty}}ds.
\]
Thus one concludes the proposition by using the a priori condition \eqref{invi prio}.
    \end{proof}
    
	\begin{corollary}
	   Let the assumptions of Proposition \ref{Ompro1} be fulfilled. Then the following estimate of $w_\theta$ holds uniformly for all $t \in [0, T_*)$ : 
	\begin{align}{\label{infw_c}}
		\left\|w_\theta(t,\cdot)\right\|_{L^\infty} \leq  C \|(\boldsymbol{w_0},\boldsymbol{h_0})\|_{H^3} \exp \left(C\|(\boldsymbol{u_0},\mathcal{H}_0)\|_{H^3}t\right).
	\end{align}
		
	\end{corollary}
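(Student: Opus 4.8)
The plan is to start from the $L^p$ bound for $w_\theta$ supplied by Lemma \ref{h_c w_c}, specialized to the endpoint $p=\infty$ (legitimate, since that lemma holds for all $p\in(1,\infty]$). This immediately reduces the corollary to controlling three objects:
\[
\|w_\theta(t,\cdot)\|_{L^\infty}\leq\Bigl(\|\boldsymbol{w_0}\|_{L^\infty}+\|h_\theta\|_{L_t^\infty L^\infty}\|\partial_z\mathcal{H}\|_{L_t^1 L^\infty}\Bigr)\exp\Bigl(\int_0^t\|\tfrac{u_r}{r}(s,\cdot)\|_{L^\infty}\,ds\Bigr).
\]
So I would control, in turn, the exponent $\int_0^t\|\tfrac{u_r}{r}\|_{L^\infty}\,ds$, the prefactor $\|\boldsymbol{w_0}\|_{L^\infty}$, and the mixed term $\|h_\theta\|_{L_t^\infty L^\infty}\|\partial_z\mathcal{H}\|_{L_t^1 L^\infty}$, and then reassemble.

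The key step, and the place where the inviscid argument genuinely departs from the viscid one of Section \ref{sec3.1}, is the exponent. Here I would use the Agmon/Gagliardo--Nirenberg estimate already recorded in \eqref{reul-g} together with Lemma \ref{nav_r cl} applied at $p=2$ and $p=6$ to get
\[
\|\tfrac{u_r}{r}(t,\cdot)\|_{L^\infty}\lesssim\|\nabla\tfrac{u_r}{r}\|_{L^2}^{1/2}\|\nabla\tfrac{u_r}{r}\|_{L^6}^{1/2}\lesssim\|\Omega(t,\cdot)\|_{L^2}^{1/2}\|\Omega(t,\cdot)\|_{L^6}^{1/2}.
\]
In contrast to the viscid case, where one trades $\|\Omega\|_{L^6}$ for a dissipative factor $\|\nabla\Omega\|_{L^2}$ and ends up with a $t^{3/4}$ bound, here I would instead invoke Proposition \ref{Ompro1}, whose uniform-in-time bound $\|\Omega(t,\cdot)\|_{L^p}\leq\|\Omega_0\|_{L^p}+2\|\mathcal{H}_0\|_{L^p}$ at $p=2,6$ makes $\|\tfrac{u_r}{r}(t,\cdot)\|_{L^\infty}$ bounded uniformly in time by (a power product reducing to) $C\|(\boldsymbol{u_0},\mathcal{H}_0)\|_{H^3}$. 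Integrating yields the clean linear bound $\int_0^t\|\tfrac{u_r}{r}\|_{L^\infty}\,ds\leq C\|(\boldsymbol{u_0},\mathcal{H}_0)\|_{H^3}\,t$, which is exactly the factor that will populate the exponential in the statement.

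For the prefactor I would bound $\|\boldsymbol{w_0}\|_{L^\infty}\lesssim\|\boldsymbol{w_0}\|_{H^3}$ by the Sobolev embedding $H^2\hookrightarrow L^\infty$ in $\mathbb{R}^3$, and use the a priori assumption \eqref{invi prio} directly to get $\|\partial_z\mathcal{H}\|_{L_t^1 L^\infty}\leq\int_0^t\|\nabla\mathcal{H}\|_{L^\infty}\,ds\leq1$. For $\|h_\theta\|_{L_t^\infty L^\infty}$ I would feed the $L^\infty$ version of Lemma \ref{h_c w_c} with the exponent estimate just obtained together with $\int_0^t\|\partial_z\mathcal{H}\|_{L^\infty}\,ds\leq1$ from \eqref{invi prio}, and $\|\boldsymbol{h_0}\|_{L^\infty}\lesssim\|\boldsymbol{h_0}\|_{H^3}$, giving $\|h_\theta\|_{L_t^\infty L^\infty}\lesssim\|\boldsymbol{h_0}\|_{H^3}\exp(C\|(\boldsymbol{u_0},\mathcal{H}_0)\|_{H^3}t)$.

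Finally I would substitute all three bounds into the displayed inequality; the two exponentials merge, the additive constant coming from \eqref{invi prio} is absorbed into the constant, and since $\|\boldsymbol{w_0}\|_{H^3}+\|\boldsymbol{h_0}\|_{H^3}\simeq\|(\boldsymbol{w_0},\boldsymbol{h_0})\|_{H^3}$, one arrives at the claimed
\[
\|w_\theta(t,\cdot)\|_{L^\infty}\leq C\|(\boldsymbol{w_0},\boldsymbol{h_0})\|_{H^3}\exp\bigl(C\|(\boldsymbol{u_0},\mathcal{H}_0)\|_{H^3}t\bigr).
\]
I expect the main obstacle to be the time-uniform control of the exponent $\int_0^t\|\tfrac{u_r}{r}\|_{L^\infty}\,ds$, which hinges entirely on the conservation-type estimate of Proposition \ref{Ompro1}; the remaining work is bookkeeping to keep $\|(\boldsymbol{w_0},\boldsymbol{h_0})\|_{H^3}$ outside and $\|(\boldsymbol{u_0},\mathcal{H}_0)\|_{H^3}t$ inside the exponential. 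A minor technical point to handle with care is justifying $\|\Omega_0\|_{L^2\cap L^6}\lesssim\|\boldsymbol{u_0}\|_{H^3}$, which uses the axisymmetric structure (a Hardy-type inequality near the axis to absorb the $1/r$ weight).
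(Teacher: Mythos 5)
Your proposal is correct and follows essentially the same route as the paper: the $p=\infty$ case of Lemma \ref{h_c w_c}, with the exponent $\int_0^t\|\frac{u_r}{r}\|_{L^\infty}\,ds$ controlled uniformly via the Gagliardo--Nirenberg estimate, Lemma \ref{nav_r cl} at $p=2,6$, and the conservation-type bound of Proposition \ref{Ompro1}, and the remaining terms handled by the a priori assumption \eqref{invi prio} and Sobolev embedding. Your treatment of the mixed term $\|h_\theta\|_{L_t^\infty L^\infty}\|\partial_z\mathcal{H}\|_{L_t^1 L^\infty}$ is in fact slightly more explicit than the paper's, which absorbs it tersely into $\|\boldsymbol{h_0}\|_{L^\infty}\exp(\cdots)$.
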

    \begin{proof}
    By using Lemma \ref{G-N} and Sobolev embedding, one has
        \begin{align*}
            \left\|\frac{u_r}{r}(t, \cdot)\right\|_{L^{\infty}} \leq C\left\|\nabla \frac{u_r}{r}(t, \cdot)\right\|_{L^2}^{1 / 2}\left\|\nabla \frac{u_r}{r}(t, \cdot)\right\|_{L^6}^{1 / 2} ,
        \end{align*}
        then by applying Lemma \ref{nav_r cl} and proposition \ref{Ompro1}, one arrives
	\begin{align*}
		\left\|\frac{u_r}{r}(t, \cdot)\right\|_{L^{\infty}} \leq C\|\Omega(t, \cdot)\|_{L^2}^{1 / 2}\|\Omega(t, \cdot)\|_{L^6}^{1 / 2} \leq C \left(\left\|\Omega_0\right\|_{L^2}+2\|\mathcal{H}_0\|_{L^2}\right)^{1 / 2}\left(\left\|\Omega_0\right\|_{L^6}+2\|\mathcal{H}_0\|_{L^6}\right)^{1 / 2},
        \end{align*}
    which indicates
        \begin{align*}
        \int_0^t\left\|\frac{u_r}{r}(s, \cdot)\right\|_{L^{\infty}} d s \leq C\|(\boldsymbol{u_0},\mathcal{H}_0)\|_{H^3}t.
	\end{align*}
    Using Lemma \ref{h_c w_c}, one deduces for all $t\in[0,T_*)$:
	\begin{align*}
		\begin{aligned}
        \left\|w_\theta(t,\cdot)\right\|_{L^\infty} & \leq \left(  \left\|\boldsymbol{w_0}\right\|_{L^\infty}+\|\boldsymbol{h_0}  \|_{L^\infty}\right)\exp \left(C\|(\boldsymbol{u_0},\mathcal{H}_0)\|_{H^3}t+C\right)\\
        &\leq  C \|(\boldsymbol{w_0},\boldsymbol{h_0})\|_{H^3} \exp \left(C\|(\boldsymbol{u_0},\mathcal{H}_0)\|_{H^3}t\right).
		\end{aligned}
	\end{align*}
This completes the proof.
    \end{proof}

	\subsection{$L^{\infty}$ Estimate of $\nabla \boldsymbol{u}$}
    In this subsection, we show an estimate of $\nabla \boldsymbol{u}$ by using Lemma \ref{inf nu cl} and the divergence-free condition $\nabla\cd \boldsymbol{u}=0$.
    \begin{lemma}
        For all $t < T_*$, the $L^{\infty} \text{ estimate}$ of $\nabla \boldsymbol{u}$ holds : 
    \begin{align}\label{infnabu}
		\|\nabla \boldsymbol{u}(t, \cdot)\|_{L^\infty} \lesssim 1+  \|(\boldsymbol{w_0},\boldsymbol{h_0})\|_{H^3} \exp \left(C\|(\boldsymbol{u_0},\mathcal{H}_0)\|_{H^3}t\right) \log \left(e+\|\boldsymbol{u}(t, \cdot)\|_{H^3}\right).
    \end{align}
    
    \end{lemma}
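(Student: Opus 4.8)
The plan is to invoke the logarithmic Beale-Kato-Majda type inequality of Lemma \ref{inf nu cl} with the choice $\boldsymbol{g}=\boldsymbol{u}$. Since $\boldsymbol{u}$ is divergence-free and lies in $H^3$ by assumption, that lemma applies directly and gives
\[
\|\nabla \boldsymbol{u}(t,\cdot)\|_{L^\infty} \lesssim 1 + \|\nabla \times \boldsymbol{u}(t,\cdot)\|_{BMO}\,\log\bigl(e+\|\boldsymbol{u}(t,\cdot)\|_{H^3}\bigr).
\]
In this way the whole estimate reduces to controlling the $BMO$ norm of the vorticity $\boldsymbol{w}=\nabla\times\boldsymbol{u}$, and the $\log$ factor is already in the desired final form.

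Next I would exploit the axial symmetry. Because $\boldsymbol{w}=w_\theta\boldsymbol{e_\theta}$ with $|\boldsymbol{e_\theta}|=1$ pointwise, the Euclidean magnitude of $\boldsymbol{w}$ coincides with $|w_\theta|$, so $\|\boldsymbol{w}(t,\cdot)\|_{L^\infty}=\|w_\theta(t,\cdot)\|_{L^\infty}$. Then, using the continuous embedding $L^\infty \hookrightarrow BMO$ (that is, $\|f\|_{BMO}\lesssim\|f\|_{L^\infty}$ applied to the Cartesian components of $\boldsymbol{w}$), one obtains
\[
\|\nabla \times \boldsymbol{u}(t,\cdot)\|_{BMO} \lesssim \|\boldsymbol{w}(t,\cdot)\|_{L^\infty} = \|w_\theta(t,\cdot)\|_{L^\infty}.
\]

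Finally I would insert the uniform bound on $\|w_\theta\|_{L^\infty}$ already established in \eqref{infw_c}, namely $\|w_\theta(t,\cdot)\|_{L^\infty}\leq C\|(\boldsymbol{w_0},\boldsymbol{h_0})\|_{H^3}\exp\bigl(C\|(\boldsymbol{u_0},\mathcal{H}_0)\|_{H^3}t\bigr)$, valid for all $t\in[0,T_*)$. Chaining the three displays yields exactly \eqref{infnabu}.

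The only genuinely nontrivial point is the $BMO$ reduction: one must check that $\|\nabla\times\boldsymbol{u}\|_{BMO}$ is controlled by $\|w_\theta\|_{L^\infty}$ despite the singularity of the frame vector $\boldsymbol{e_\theta}$ at the axis $r=0$. This is handled by observing that the $L^\infty$ control is on the frame-independent pointwise norm $|\boldsymbol{w}|$, so the $L^\infty\hookrightarrow BMO$ embedding can be applied coordinatewise to the Cartesian components of $\boldsymbol{w}$ with no axis issue; all remaining steps are direct substitutions.
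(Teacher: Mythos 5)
Your proposal is correct and matches the paper's own argument step for step: apply Lemma \ref{inf nu cl} to $\boldsymbol{u}$, bound $\|\nabla\times\boldsymbol{u}\|_{BMO}$ by $\|w_\theta\|_{L^\infty}$ via the embedding $L^\infty\hookrightarrow BMO$, and then insert the bound \eqref{infw_c}. Your extra remark justifying the $BMO$ reduction through the frame-independent pointwise norm $|\boldsymbol{w}|=|w_\theta|$ is a correct elaboration of a step the paper leaves implicit.
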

    \begin{proof}
        Applying Lemma \refeq{inf nu cl} to $\nabla \boldsymbol{u}$, for $\boldsymbol{u}$
	is divergence-free and belongs to $C\left(\left[0, T_*\right) ; H^3\left(\mathbb{R}^3\right)\right)$ we get 
	\begin{align*}
		\begin{aligned}
			\|\nabla \boldsymbol{u}(t, \cdot)\|_{L^\infty} & \lesssim 1+\|\nabla \times \boldsymbol{u}(t, \cdot)\|_{BMO} \log \left(e+\|\boldsymbol{u}(t, \cdot)\|_{H^3})\right. \\
			& \lesssim 1+\left\|w_\theta(t,\cd)\right\|_{L^\infty}\log \left(e+\|\boldsymbol{u}(t, \cdot)\|_{H^3}\right), \quad \forall t \in\left[0, T_*\right).
		\end{aligned}
	\end{align*}
Using (\refeq{infw_c}), we arrive at the $L^{\infty} \text{ estimate}$ of $\nabla \boldsymbol{u}$ \eqref{infnabu}.
    \end{proof}
    
	\subsection{End of the proof}
	
    Finally, we arrive at higher-order estimates of the non-resistive inviscid Hall-MHD system \eqref{eq1.2}. Define, for $t \in [0, T_*)$,
	\begin{align*}
		\begin{aligned}
			& E^2(t): =\|(\boldsymbol{u}, \boldsymbol{h}, \mathcal{H})(t, \cdot)\|_{H^3}^2,\q\text{with}\q E_0^2: =\left\|\left(\boldsymbol{u_0}, \boldsymbol{h_0}, \mathcal{H}_0\right)\right\|_{H^3}^2.
		\end{aligned}
	\end{align*} 
In this way, the higher-order bound we have given in Lemma \ref{nahH eges} reads 
    \begin{align*}
	E^2(t)\leq 	E^2_0 \exp\left( C \int_0^t\|\nabla(\boldsymbol{u},\boldsymbol{h},\mathcal{H})(s,\cd)\|_{L^{\infty}}ds\right).
	\end{align*} 
	Based on the estimate of $\|\nabla\boldsymbol{u}(t, \cdot)\|_{L^{\infty}}$ derived in \eqref{infnabu}, and recalling the a priori assumption \eqref{invi prio}, we have
\begin{small}
\[
E^2(t)\leq E_0^2 \exp \left( C \int_0^t\left\{ 1+  \|(\boldsymbol{w_0},\boldsymbol{h_0})\|_{H^3} \exp \left(C\|(\boldsymbol{u_0},\mathcal{H}_0)\|_{H^3}s\right) \log \left(e+\|\boldsymbol{u}(s, \cdot)\|_{H^3}\right) \right\}ds +C\right).
\]
\end{small}
This indicates
\begin{small}
\[
\begin{aligned}
\log (e + E^2(t)) \leq \log (e + E_0^2) + C + C \int_0^t \biggl\{ &1 + \|(\boldsymbol{w}_0, \boldsymbol{h}_0)\|_{H^3} \\
&\times \exp \bigl(C \|(\boldsymbol{u}_0, \mathcal{H}_0)\|_{H^3} s\bigr) \log \bigl(e + E^2(s)\bigr) \biggr\}  ds.
\end{aligned}
\]
\end{small}
Applying Grönwall's inequality, one finds that
\begin{align}{\label{eget}}
	E^2(t) \leq \left(C\left(e+E_0^2\right)\right)^{\exp \left(Ct + CE_0\exp \left(CE_0t\right)\right)}, \quad \forall t\in [0, T_*).
\end{align}
The last inequality follows from the a priori assumption \eqref{invi prio}. This shows that as long as \eqref{invi prio} holds on $\left[0, T_*\right)$, the solution $(\boldsymbol{u},\boldsymbol{h})$ to the initial value problem \eqref{eq1.2} would keep in $H^3$ before $t=T_*$.

Now it remains to verify the a priori assumption \eqref{invi prio}. Recalling the estimates of $\na\boldsymbol{h}$ and $\na\mathcal{H}$ in Lemma \ref{nahH eges}, using \eqref{eget}, one deduces
	\begin{align*}
			\|\nabla (\boldsymbol{h},\mathcal{H})(t, \cdot)\|_{L^{\infty}} \leq & \left\|\nabla (\boldsymbol{h}_0,\mathcal{H}_0)\right\|_{L^{\infty}} \exp \left(C \int_0^t\left(\|\nabla \boldsymbol{u}(s, \cdot)\|_{L^{\infty}}+\left\|\partial_z \mathcal{H}(s, \cdot)\right\|_{L^{\infty}}\right) d s\right)\\
			\leq & \left\|\nabla (\boldsymbol{h}_0,\mathcal{H}_0)\right\|_{L^{\infty}} \exp \left(C \int_0^t E(s) d s\right)\\
			\leq & \ve \exp \left(C t \left(C\left(e+E_0\right)\right)^{\exp \left(Ct + CE_0\exp \left(CE_0t\right)\right)}\right).
	\end{align*}
	Therefore, the following restriction of $T_*$
	\begin{align}{\label{meetprio}}
		\ve T_* \exp \left(C T_* \left(C\left(e+E_0\right)\right)^{\exp \left(CT_* + CE_0\exp \left(CE_0T_*\right)\right)}\right) \leq \frac{1}{3}
	\end{align}
	ensures the a priori assumption \eqref{invi prio} holds.  Without loss of generality, we assume that $\ve$ is sufficiently small, therefore $T_*>1$. Noticing that $\log (1+t) \leq t \leq e^t-1$ for all $t \geq 0$, one deduces
	\begin{align*}
		\exp (CT_* + CE_0\exp \left(CE_0T_*\right)) \leq \exp \left(\exp \left(CE_0 (T_*+1)\right)\right) ,
	\end{align*}
	and
	\begin{align*}
		C T_* \left(C\left(e+E_0\right)\right)^{B} &\leq \exp \big(B \log \left ( C T_* \left(C\left(e+E_0\right)\right)\right)\big)\\
        & \leq \exp \big( C B\left( (1+E_0)T_*\right)\big)
	\end{align*}
	for any $B>0$. Thus one derives that
    \begin{small}
	\begin{align*}
		\begin{aligned}
			& T_* \exp \left(C T_* \left(C\left(e+E_0\right)\right)^{\exp \left(CT_* + CE_0\exp \left(CE_0T_*\right)\right)}\right) 
			 \leq \frac{1}{3} \exp \left(\exp \left(\exp \left(\exp \left(C_*^{-1}\left(1+E_0\right) T_*\right)\right)\right)\right)
		\end{aligned}
	\end{align*}
    \end{small}
    for some constant $C_*>0$. Therefore, the condition \eqref{meetprio} is satisfied provided
	\begin{align*}
		\frac{1}{\ve}=\exp \left(\exp \left(\exp \left(\exp \left(C_*^{-1}\left(1+E_0\right)T_*\right)\right)\right)\right),
	\end{align*}
	for sufficiently small $\varepsilon>0$. In this way, one finds
	\begin{align*}
		T_*=\frac{C_*}{1+E_0} \log\left(\log \left(\log \left(\log \left(\ve^{-1}\right)\right)\right)\right).
	\end{align*}
	This gives the desired lower bound of the lifespan in Theorem \ref{tivis}. 
	
    \section*{Acknowledgment}
    T. Zhou is supported by Postgraduate Research \& Practice Innovation Program of Jiangsu Province(No. KYCX25\_1562).\\
\\
\textbf{Author Contributions} LY and TZ wrote the main manuscript text and reviewed the manuscript.\\
\\
\textbf{Data Availability} No datasets were generated or analysed during the current study.
\section*{Declarations}
\textbf{Conflict of interest} The authors declare no conflict of interest.

\vspace{0.5em}
Linbin Yang and Taoran Zhou\\
School of Mathematics and Statistics\\
Nanjing University of Information Science and Technology\\
Nanjing 210044\\
China\\
E-mails: yanglinbin@nuist.edu.cn;  zhoutaoran@nuist.edu.cn

    \end{document}